\documentclass{amsart}

\usepackage{amsthm}
\usepackage{amssymb}
\usepackage{latexsym}
\usepackage{euscript}
\usepackage{amsmath}
\usepackage{amstext}
\usepackage{amsgen}
\usepackage{amsbsy}
\usepackage{amsopn}
\usepackage{amsfonts}
\usepackage{hyperref} 
\usepackage{graphicx}
\usepackage{bbm}
\usepackage{tikz}
\usepackage{tikz-cd}    
\usepackage{enumerate}
\usepackage{color}
\usepackage{a4wide}
\usepackage{mathrsfs}
\usepackage{float}
\usepackage{array}
\newcolumntype{M}[1]{>{\centering\arraybackslash}m{#1}}
\usepackage{relsize}
\usepackage{mathtools}
\usepackage{adjustbox}

\usepackage{thmtools} %added so that cleveref correctly identifies results with same counter
\usepackage[capitalize]{cleveref}
\crefname{theorem}{Theorem}{Theorems}
\Crefname{theorem}{Theorem}{Theorems}
\crefname{lemma}{Lemma}{Lemmas}
\Crefname{lemma}{Lemma}{Lemmas}
\crefname{proposition}{Proposition}{Propositions}
\Crefname{proposition}{Proposition}{Propositions}
\crefname{corollary}{Corollary}{Corollaries}
\Crefname{corollary}{Corollary}{Corollaries}
\crefname{definition}{Definition}{Definitions}
\Crefname{definition}{Definition}{Definitions}
\crefname{example}{Example}{Examples}
\Crefname{example}{Example}{Examples}
\crefname{remark}{Remark}{Remarks}
\Crefname{remark}{Remark}{Remarks}
\crefname{equation}{}{}
\Crefname{equation}{}{}

\usetikzlibrary{matrix,arrows,decorations.pathmorphing}

\theoremstyle{definition}

\newtheorem{theorem}{Theorem}[section]
\newtheorem{proposition}[theorem]{Proposition}
\newtheorem{corollary}[theorem]{Corollary}
\newtheorem{lemma}[theorem]{Lemma}
\newtheorem{thmx}{Theorem}

\theoremstyle{definition}
\newtheorem{definition}[theorem]{Definition}
\newtheorem{example}[theorem]{Example}
\newtheorem{remark}[theorem]{Remark}

\newcommand{\defn}[1]{\emph{#1}}
\newcommand{\N}{\mathbb{N}}
\newcommand{\Z}{\mathbb{Z}}

\newcommand{\R}{\mathbb{R}}
\newcommand{\C}{\mathbb{C}}

\newcommand{\kk}{\mathbbm{k}}
\newcommand{\PP}{\mathbb{P}}
\newcommand{\im}{\mathrm{im}}
\newcommand{\cat}[1]{\mathrm{#1}}

\newcommand{\coker}{\mathrm{coker}\,}
\newcommand{\kvect}{\cat{Vect}_{\kk}}
\newcommand{\loc}[1]{\cat{Loc}\!\left( #1\right)}
\newcommand{\ep}[1]{\cat{EP}\!\left( #1\right)}
\newcommand{\perv}[2]{{}^{#1}\cat{Perv}\!\left(#2\right)}
\newcommand{\pfun}[2]{{}^{#1}{#2}}

\newcommand{\der}[2]{\mathrm{D}^{#1}({#2})}
\newcommand{\shf}[1]{\mathrm{Sh}\!\left( #1\right)}
\newcommand{\constr}[2]{\cat{D}_{#1}({#2})}

\newcommand{\sh}[1]{\mathcal{#1}}
\newcommand{\id}{\mathrm{id}}
\newcommand{\extf}[2]{\mathbb{E}_{#1}(#2)}
\newcommand{\sextf}[2]{\mathbb{SE}_{#1}(#2)}
\newcommand{\unc}[1]{\Psi_f^{\mathrm{un}}(#1)}
\newcommand{\uncc}{\Psi_f^{\mathrm{un}}}
\newcommand{\nuncc}{\Psi_f^{\neq1}}

\newcommand{\Mor}[3]{{\mathrm{Hom}}_{#1}\!\left(#2,#3\right)}

\newcommand{\Ext}[4]{{\mathrm{Ext}_{#1}^{#2}}\!\left(#3,#4\right)}

\newcommand{\dual}{\mathbb{D}}
\newcommand{\pt}{{\bf pt}}

\newcommand{\monic}{\hookrightarrow}

\hypersetup{
    colorlinks=true,
    linkcolor=red}

\makeatletter
\@namedef{subjclassname@2020}{\textup{2020} Mathematics Subject Classification}
\makeatother

\newcommand{\pcoh}[2]{{{^p}H^{#1}(#2)}}

\newcommand{\Plsh}{\mathrm{P}_{!}}
\newcommand{\Pls}{\mathrm{P}_{\!*}}

\title{Indecomposable extensions of perverse sheaves over a closed stratum}  
\author{Alessio Cipriani}
\address{Alessio Cipriani, Dipartimento di Informatica - Settore di Matematica, Universit\`a degli Studi di Verona, Strada le Grazie 15 - Ca' Vignal, I-37134 Verona, Italy}
\email{alessio.cipriani@univr.it}
\begin{document} 

\begin{abstract}
Given a topologically stratified space $X$, we develop a categorical framework for extensions of perverse sheaves over a closed stratum $S$. We introduce the notion of $S$-small extensions and extension pairs relative to a fixed perverse sheaf on $X\smallsetminus S$. By replacing the homotopical assumptions $\pi_1(S)=\pi_2(S)=0$ in the work of MacPherson and Vilonen \cite{MR833195} with the categorical condition that the category of local systems on $S$ is semisimple, we construct an equivalence of additive categories between $S$-small extensions and extension pairs. This extends the MacPherson--Vilonen description to a more general setting and yields a maximal extension functor, generalising Beilinson's construction. As a consequence, we obtain a structural classification of indecomposable $p$-perverse sheaves on $X$: every such object is either an extension by zero of an indecomposable local system on $S$, or arises from an extension pair whose canonical morphism does not decompose as a direct sum.
\end{abstract} 

\subjclass[2020]{18G80, 32S60}
\keywords{Constructible derived category, perverse sheaves, indecomposable objects}

\maketitle
\section{Introduction}

Categories of perverse sheaves on a possibly singular space $X$ were introduced by Beilinson, Bernstein and Deligne in \cite{BBD}. They arise as the heart of a t-structure, determined by a $\Z$-valued function $p$ on $X$ called perversity, on the constructible derived category of $X$. When $X$ is a topologically stratified space with finitely many strata, the category $\perv{p}{X}$ of $p$-perverse sheaves on $X$ is Hom-finite, abelian and length for any choice of the perversity. By \cite[\S5]{Krause}, it follows that $\perv{p}{X}$ is Krull--Schmidt, so that every $p$-perverse sheaf decomposes as a finite direct sum of indecomposable objects. Under the further assumption that the strata of $X$ have finite fundamental group, the category $\perv{p}{X}$ is equivalent to a category of finite-dimensional modules over a finite-dimensional $\kk$-algebra, independently of the choice of the perversity function, see \cite[Corollary 5.3]{CIPRIANI_2021}. The Krull--Schmidt property of $\perv{p}{X}$ motivates the study of indecomposable perverse sheaves, that is objects with local endomorphism algebra. However, the problem of classifying all indecomposable perverse sheaves is in general rather hard, as extension of decomposable perverse sheaves can give rise to indecomposable ones, see \cref{example:kronecker}. 

Our goal is to understand extensions of perverse sheaves on $X\smallsetminus S$ over a closed stratum $S\subset X$, where $X$ is a topologically stratified space. This strategy is the same used in \cite{MR833195} and reduces the problem to an inductive argument on the stratification since the strata of $X$ can be ordered so that each stratum is closed in the union of itself with the preceding ones. To avoid the machinery of derived categories, MacPherson and Vilonen give two versions of how one can uniquely extend a perverse sheaf on the open complement of a closed stratum to a perverse sheaf on $X$ under the assumption that $\pi_1(S)=\pi_2(S)=0$, see \cite[Proposition 6.1]{MR833195}. The first one is for topologically stratified spaces, see \cite[Theorem 4.5]{MR833195}, and the second for complex analytic manifolds, see \cite[Theorem 5.3]{MR833195}. In both cases, they obtain an equivalence between the category of middle perverse sheaves and an auxiliary abelian category whose objects are a perverse sheaf on $X\smallsetminus S$ and some linear-algebraic data encoded in a commutative diagram of sheaves of vector spaces. They also present versions of the inductive step which work under extra information on the structure normal to the closed stratum $S$. They work with a specific choice of perversity on $X$, namely the middle perversity, although they remark that their results extend to arbitrary perversities.

In this paper, we work in the general setting of a topologically stratified space, and we allow $p$ to be any perversity function on $X$. We do not require any restriction on the number of strata. In particular, we do not assume that $\perv{p}{X}$ is equivalent to a category of modules over a finite-dimensional algebra or that it is satisfies the Krull--Schmidt property. Our only structural assumption is that the category of local systems on the closed stratum $S\subset X$ is semisimple, equivalently that all the $\mathrm{Ext}^1$-groups between local systems on $S$ vanish. This assumption is satisfied, for instance, when $S$ has finite fundamental group and one works with a field whose characteristic does not dived its order. In contrast with \cite{MR833195}, we do not make any assumption on $\pi_2(S)$, however when all the (triangulated) $\mathrm{Ext}^2$-groups vanish, which happens if for instance $H^2(S;\sh{L})\cong0$ for any $\sh{L}\in\loc{S}$, then our construction becomes simpler. Under these minimal assumptions, we develop a framework for extending perverse sheaves over $S$ that differs substantially from that of \cite{MR833195}, and that applies in a strictly more general setting.

We review the necessary background material in \cref{section:background} and in \cref{extf} we introduce the category $\extf{S}{\sh{F}}$ of extensions of a fixed perverse sheaf $\sh{F}\in\perv{p}{X\smallsetminus S}$ over the closed stratum $S\subset X$. \cref{section:ProjfuncPlshPls} is devoted to the introduction of the projection functors $\Plsh,\Pls\colon\perv{p}{X}\to\perv{p}{X}$ and to the study how properties of morphisms behave under them. We note that for any object $\sh{E}\in\extf{S}{\sh{F}}$ there is a canonical morphism $\Plsh\sh{E}\to\Pls\sh{E}$, see \cref{can morphism proj functors}, given by the factorisation through the intermediate extension of $\sh{F}\in\perv{p}{X\smallsetminus S}$. We define the notion of $S$-small perverse sheaves in \cref{subsec_small_exts}, that is objects $\sh{E}\in\perv{p}{X}$ in the kernel of the intermediate restriction functor $\pfun{p}{{\imath}^{!*}}\coloneqq\im(\pfun{p}{\imath^!}\to\pfun{p}{\imath^*})$, see \cref{restriction_functors} and we introduce the category $\sextf{S}{\sh{F}}$ of $S$-small extensions. In \cref{small_no_summ_S} we characterise the $S$-small perverse sheaves as the objects with no summands on the closed stratum $S$ and we show that the category $\sextf{S}{\sh{F}}$ is additive, closed under quotients and sub-objects, but in general not abelian, see \cref{rem:SESF_not_abelian}. In \cref{subsection:extending_pervsheaves}, we introduce extension pairs relative to a fixed perverse sheaf $\sh{F}$ on $X\smallsetminus S$, see \cref{ext_pair_relative}, that is pairs of objects in $\perv{p}{X}$ which are extensions of $\sh{F}$, the first with no quotient and the second with no sub-object on the closed stratum $S\subset X$ respectively, and such that a certain morphism in the constructible derived category vanishes. Our first main result establishes a bijection between isomorphism classes of extension pairs and $S$-small extensions.

\begin{thmx}(\cref{thm:thmonetoonecorrsmallext})\label{thmA}
Let $X$ be a topologically stratified space, $S\subset X$ a closed stratum so that $\loc{S}$ is semisimple and $\sh{F}\in\perv{p}{X\smallsetminus S}$. 

There is a bijection between (isomorphism classes of) extension pairs $(\sh{A},\sh{B})$ relative to a fixed $\sh{F}\in\perv{p}{X\smallsetminus S}$ and (isomorphism classes of) $S$-small extension $\sh{E}(\sh{A},\sh{B})\in\extf{S}{\sh{F}}$.
\end{thmx}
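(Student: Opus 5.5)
Starting from an $S$-small extension $\sh{E}\in\sextf{S}{\sh{F}}$, I will attach to it the pair $(\Plsh\sh{E},\Pls\sh{E})$. Conversely, from an extension pair $(\sh{A},\sh{B})$ I will rebuild $\sh{E}(\sh{A},\sh{B})$. The plan is to show that the two assignments respect isomorphisms and are mutually inverse on isomorphism classes. Write $j\colon X\smallsetminus S\to X$ and $\imath\colon S\to X$ for the inclusions.

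\textbf{From $\sh{E}$ to a pair.} Here $\Plsh\sh{E}$ is the largest quotient of $\sh{E}$ with no sub-object supported on $S$, and $\Pls\sh{E}$ is the largest sub-object of $\sh{E}$ with no quotient supported on $S$ (up to the convention fixed in \cref{section:ProjfuncPlshPls}). I will check three things.
\begin{enumerate}[(i)]
\item Both objects restrict to $\sh{F}$ on $X\smallsetminus S$ and have the required vanishing of sub-objects or quotients on $S$. This should follow from the results on how the projection functors act on morphisms.
\item The canonical morphism $\Plsh\sh{E}\to\Pls\sh{E}$ of \cref{can morphism proj functors}, which factors through $j_{!*}\sh{F}$, satisfies the defining vanishing condition of \cref{ext_pair_relative}.
\item The assignment sends isomorphic extensions to isomorphic pairs. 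This is just functoriality of $\Plsh$ and $\Pls$.
\end{enumerate}
For (ii), I will use that $S$-smallness, i.e.\ $\pfun{p}{\imath^{!*}}\sh{E}=0$, forces the composite $\pfun{p}{\imath^!}\sh{E}\to\pfun{p}{\imath^*}\sh{E}$ to vanish. I will then translate this into the vanishing of the relevant derived-category morphism.

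\textbf{From a pair to $\sh{E}$.} Given $(\sh{A},\sh{B})$, both $\sh{A}$ and $\sh{B}$ fit into short exact sequences with $j_{!*}\sh{F}$ and objects $\imath_*\sh{L}$, $\sh{L}\in\loc{S}$ (shifted to be perverse). Concretely,
\[0\to \imath_*\sh{L}_A\to\sh{A}\to j_{!*}\sh{F}\to 0,\qquad 0\to j_{!*}\sh{F}\to\sh{B}\to\imath_*\sh{L}_B\to 0.\]
I will form the composite extension class in $\mathrm{Ext}^2(\imath_*\sh{L}_B,\imath_*\sh{L}_A)$, i.e.\ the Yoneda product of the two classes. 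The vanishing hypothesis in the definition of extension pair should say exactly that this class is zero. Yoneda theory in the heart then provides a perverse sheaf $\sh{E}$ with a three-step filtration: $\imath_*\sh{L}_A\subset\sh{E}'\subset\sh{E}$, with $\sh{E}'\cong\sh{A}$ and $\sh{E}/\imath_*\sh{L}_A\cong\sh{B}$. Concretely, $\sh{E}$ can be taken as a pullback/pushout, namely a cone in $\der{b}{X}$ made perverse.

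\textbf{Main obstacle: uniqueness.} Such an $\sh{E}$ is not unique a priori. Different filling choices differ by an element of $\mathrm{Ext}^1(\imath_*\sh{L}_B,\imath_*\sh{L}_A)$, which is in turn governed by $\mathrm{Ext}^1_{\loc{S}}$ together with contributions from the normal structure. Semisimplicity of $\loc{S}$ kills the $\mathrm{Ext}^1$ between local systems on $S$. I then expect to show that the different fillings become isomorphic as objects of $\extf{S}{\sh{F}}$, by adjusting via automorphisms of $\sh{A}$ and $\sh{B}$ that are the identity on $\sh{F}$. The same semisimplicity splits any summand on $S$, so I can also discard such summands to land in $\sextf{S}{\sh{F}}$, using \cref{small_no_summ_S}.

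\textbf{Mutual inverses.} Finally I will verify the two round trips.
\begin{itemize}
\item[$\bullet$] For $\Plsh\sh{E}(\sh{A},\sh{B})\cong\sh{A}$ and $\Pls\sh{E}(\sh{A},\sh{B})\cong\sh{B}$, I will use that $\sh{A}$ has no quotient on $S$ and $\sh{B}$ has no sub-object on $S$.
\item[$\bullet$] For $\sh{E}\cong\sh{E}(\Plsh\sh{E},\Pls\sh{E})$, I will use the characterisation in \cref{small_no_summ_S}: an $S$-small $\sh{E}$ has no summand on $S$, so its middle filtration step is already determined by $\Plsh\sh{E}$ and $\Pls\sh{E}$.
\end{itemize}
The non-uniqueness of the Yoneda filling in the previous step is where I expect most of the work.
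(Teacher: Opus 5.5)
Your outline matches the paper's: send $\sh{E}$ to $(\Plsh\sh{E},\Pls\sh{E})$, and rebuild $\sh{E}$ from a lift whose existence is exactly $\epsilon_{\sh{A}}\epsilon_{\sh{B}}=0$. However, the step you flag as the main obstacle is left open, and the diagnosis behind it is wrong. You correctly say that different fillings differ by an element of $\Ext{}{1}{\imath_*\sh{L}_B}{\imath_*\sh{L}_A}$. But there are no ``contributions from the normal structure''. Since $\imath^*\imath_*\cong\id$, the functor $\imath_*$ is fully faithful on constructible derived categories, so
\[
\Ext{\perv{p}{X}}{1}{\imath_*\sh{L}_B}{\imath_*\sh{L}_A}\cong\Mor{\constr{c}{X}}{\imath_*\sh{L}_B}{\imath_*\sh{L}_A[1]}\cong\Mor{\constr{c}{S}}{\sh{L}_B}{\sh{L}_A[1]}\cong\Ext{\perv{p}{S}}{1}{\sh{L}_B}{\sh{L}_A}=0
\]
by semisimplicity. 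Apply $\Mor{}{\imath_*\sh{L}_B[-1]}{-}$ to the triangle $\imath_*\sh{L}_A\to\sh{A}\to\pfun{p}{\jmath_{!*}}\sh{F}\xrightarrow{\epsilon_{\sh{A}}}\imath_*\sh{L}_A[1]$. This shows that $\epsilon_{\sh{B}}$ lifts to some $\widetilde{\epsilon}\colon\imath_*\sh{L}_B[-1]\to\sh{A}$ (existence is $\epsilon_{\sh{A}}\epsilon_{\sh{B}}=0$), and that the lift is \emph{unique}. Hence $\sh{E}(\sh{A},\sh{B})=\mathrm{cone}(\widetilde{\epsilon})$ is well defined, isomorphic pairs give isomorphic cones, and there is nothing to adjust. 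This vanishing is the crux of the paper's proof, and your proposal does not supply it.

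Your fallback of absorbing different fillings via automorphisms of $\sh{A}$ and $\sh{B}$ restricting to $\id_{\sh{F}}$ would in any case be vacuous. The only such automorphisms are the identities: an endomorphism $\delta$ with $\jmath^*\delta=0$ has image a quotient of $\sh{A}$ (resp.\ a sub-object of $\sh{B}$) supported on $S$, so $\delta=0$.

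The same missing point undermines your round trip $\sh{E}\cong\sh{E}(\Plsh\sh{E},\Pls\sh{E})$. Having no summand on $S$ does not by itself determine $\sh{E}$ from the pair. What works is the following. $S$-smallness says precisely that the composite $\imath_*\pfun{p}{\imath^!}\sh{E}\to\sh{E}\to\imath_*\pfun{p}{\imath^*}\sh{E}$ vanishes, i.e.\ $\imath_*\pfun{p}{\imath^!}\sh{E}\subset\Plsh\sh{E}$. It also yields $\pfun{p}{\imath^!}\Plsh\sh{E}\cong\pfun{p}{\imath^!}\sh{E}$ and $\pfun{p}{\imath^*}\Pls\sh{E}\cong\pfun{p}{\imath^*}\sh{E}$. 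Consequently the class of $0\to\Plsh\sh{E}\to\sh{E}\to\imath_*\pfun{p}{\imath^*}\sh{E}\to0$ pushes forward along $\Plsh\sh{E}\to\pfun{p}{\jmath_{!*}}\sh{F}$ to the class of $\Pls\sh{E}=\sh{E}/\imath_*\pfun{p}{\imath^!}\sh{E}$. So it is a lift of $\epsilon_{\Pls\sh{E}}$, and therefore \emph{the} lift.

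The same filtration $\imath_*\pfun{p}{\imath^!}\sh{E}\subset\Plsh\sh{E}\subset\sh{E}$ is what your step (ii) needs, since a Yoneda product realised by a three-step filtered object is zero. The vanishing of $\pfun{p}{\imath^!}\sh{E}\to\pfun{p}{\imath^*}\sh{E}$ does not ``translate'' into $\epsilon_{\sh{A}}\epsilon_{\sh{B}}=0$ directly; it produces that filtration.

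Two smaller corrections:
\begin{enumerate}
\item At the start you have $\Plsh$ and $\Pls$ the wrong way round. In fact $\Plsh\sh{E}=\im(\pfun{p}{\jmath_!}\jmath^*\sh{E}\to\sh{E})$ is a sub-object with no quotient on $S$, and $\Pls\sh{E}$ is a quotient with no sub-object on $S$. With your description, condition (2) of \cref{ext_pair_relative} would fail.
\item The cone you construct is automatically $S$-small, so no summands need discarding. The map $\imath_*\sh{L}_A\to\sh{E}\to\imath_*\sh{L}_B$ factors through the composite $\sh{A}\to\sh{E}\to\imath_*\sh{L}_B$, which is zero.
\end{enumerate}
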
 

We prove \cref{thmA} by constructing a $S$-small extension relative to $\sh{F}\in\perv{p}{X}$ by applying the octahedral axiom to triangles involved in the definition of extension pair. Vice-versa, we show that starting with an $S$-small extension, the pair $(\Plsh\sh{E},\Pls\sh{E})$ is an extension pair since a certain (triangulated) $\mathrm{Ext}^2$-class obtained as product of two abelian $\mathrm{Ext}^1$-classes vanishes.

We note that \cref{thmA} extends beyond the situation of categories of perverse sheaves to the case of an abelian recollement $(\cat{A}_S,\cat{A},\cat{A}_{U})$, see \cite{BBD}, in which the Serre subcategory $\cat{A}_S\subset\cat{A}$ is semisimple, but we do not investigate this further.

\cref{thmA} also allows us to define the notion of the maximal extension of a fixed object $\sh{F}\in\perv{p}{X\smallsetminus S}$, see \cref{defn:generalised_max_ext}, as the (isomorphism class) of a perverse sheaf $\pfun{p}{\Xi}\sh{F}$ corresponding to a certain extension pair. This generalises the notion of Beilinson's maximal extension, see \cref{Beilinson_max_ext}, to a wider class of stratified spaces.  

In \cref{cat_of_ext pairs}, we introduce the category $\ep{X\smallsetminus S}$ of extension pairs over $S$ and we show that the result of \cref{thmA} can be upgraded to an equivalence of additive categories. In fact, since the constructible derived category of $X$ is an algebraic triangulated category, see \cref{constructible_der_cat}, the argument of \cref{thmA} is functorial.

\begin{thmx}(\cref{thm:equivalence_of_cat})\label{thmB}
Let $X$ be a topologically stratified space, $S\subset X$ be a closed stratum such that $\loc{S}$ is semisimple and $\sh{F}\in\perv{p}{X\smallsetminus S}$.

The assignment 
\begin{equation*}\begin{split}
\Theta \colon    \sextf{S}{\sh{F}} &\to \ep{X\smallsetminus S}\\ 
  \sh{E} & \mapsto (\Plsh\sh{E},\Pls\sh{E})
\end{split}\end{equation*}
defines an equivalence of additive categories.
\end{thmx}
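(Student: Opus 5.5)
Since \cref{thmA} already gives a bijection on isomorphism classes, the plan is to show that $\Theta$ is a well-defined additive functor that is full and faithful. Essential surjectivity is then exactly the content of \cref{thmA}: every extension pair $(\sh{A},\sh{B})$ is isomorphic to $(\Plsh\sh{E}(\sh{A},\sh{B}),\Pls\sh{E}(\sh{A},\sh{B}))$.

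\textbf{Well-definedness and additivity.} Since $\Plsh,\Pls$ are functors $\perv{p}{X}\to\perv{p}{X}$ (\cref{section:ProjfuncPlshPls}), any morphism $\varphi\colon\sh{E}\to\sh{E}'$ in $\sextf{S}{\sh{F}}$ induces a pair $(\Plsh\varphi,\Pls\varphi)$. Naturality of the canonical morphism $\Plsh\to\Pls$ (\cref{can morphism proj functors}) shows that this pair is compatible with the structure maps of the extension pairs. So it is a morphism in $\ep{X\smallsetminus S}$, and functoriality and additivity follow from those of $\Plsh,\Pls$.

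\textbf{Faithfulness.} Suppose $\Plsh\varphi=0$ and $\Pls\varphi=0$. The natural maps $\Plsh\sh{E}\to\sh{E}\to\Pls\sh{E}$ give exact sequences whose kernels and cokernels are supported on $S$. Then $\varphi$ kills the image of $\Plsh\sh{E}$, so it factors through $\sh{E}/\Plsh\sh{E}$, which is supported on $S$. Its image in $\sh{E}'$ is therefore a subobject supported on $S$ that maps to zero in $\Pls\sh{E}'$. Since $\sh{E}'$ is $S$-small, \cref{small_no_summ_S} together with the semisimplicity of $\loc{S}$ should force such a subobject to vanish. I would check this via the description of $\sh{E}'$ as the octahedral construction from the triangles $\imath_*\imath^!\to\cdot\to j_*j^*$.

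\textbf{Fullness.} This is the main obstacle. Given a morphism $(\alpha,\beta)\colon(\Plsh\sh{E},\Pls\sh{E})\to(\Plsh\sh{E}',\Pls\sh{E}')$, I would lift it through the octahedral construction of \cref{thmA}. The object $\sh{E}$ sits as the cone in a diagram built from the triangle $\Plsh\sh{E}\to\Pls\sh{E}\to C$, with $C$ a complex supported on $S$. In a general triangulated category a morphism of triangles is not functorial in the cone. Here, however, $\constr{}{X}$ is algebraic (\cref{constructible_der_cat}), so I would pass to a dg-enhancement and realise the whole octahedron as a functorial cone construction of the morphism $\Plsh\sh{E}\to\Pls\sh{E}$. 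This yields a morphism $\tilde\varphi\colon\sh{E}\to\sh{E}'$ with $\Plsh\tilde\varphi=\alpha$ and $\Pls\tilde\varphi=\beta$.

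The remaining ambiguity is the choice of fill-in, which is controlled by $\mathrm{Hom}$'s from terms supported on $S$ into $\sh{E}'$ (or from $\sh{E}$ into $S$-supported terms) in degree $-1$. I expect these to vanish by perversity and $S$-smallness, together with $\Ext{}{1}{\cdot}{\cdot}=0$ in $\loc{S}$. If the dg-route proves cumbersome, the fallback is to show that the relevant obstruction group is $\Ext{}{1}{\cdot}{\cdot}$ between local systems on $S$, which vanishes by hypothesis, so lifts exist; uniqueness then comes from faithfulness.
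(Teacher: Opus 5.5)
\textbf{The gap is in faithfulness, and it cannot be closed, because $\Theta$ is not faithful.}

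Your last step claims that a subobject of the $S$-small object $\sh{E}'$ that is supported on $S$ must vanish. This is false. $S$-smallness only says that the composite $\imath_*\pfun{p}{\imath^!}\sh{E}'\hookrightarrow\sh{E}'\twoheadrightarrow\imath_*\pfun{p}{\imath^*}\sh{E}'$ is zero. For instance, $\jmath_!\kk_U[1]$ in \cref{ex:P1} is $S$-small yet contains $\imath_*\kk_{\pt}$. Your extra condition, that the subobject maps to zero in $\Pls\sh{E}'$, is automatic by \cref{rem:projection_in_kernel}, so it adds nothing.

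The deeper problem is how Hom-sets of $\extf{S}{\sh{F}}$ behave. For a morphism $\varphi$ there, $\jmath^*\varphi$ is prescribed by the identifications with $\sh{F}$. Moreover $\Plsh\varphi$ and $\Pls\varphi$ depend only on $\jmath^*\varphi$: as in the proof of \cref{lem:plshplsmonoepiiso}, $\Plsh\varphi\circ\pi_{\sh{E}}=\pi_{\sh{E}'}\circ\pfun{p}{\jmath_!}\jmath^*\varphi$ with $\pi_{\sh{E}}$ epic, and dually for $\Pls$. So $\Theta$ is constant on each Hom-set. Testing $\Theta(\varphi)=0$ is therefore vacuous: no such $\varphi$ exists unless $\sh{F}=0$.

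Yet these Hom-sets are not singletons. Take $g\in\Mor{\perv{p}{S}}{\pfun{p}{\imath^*}\sh{E}}{\pfun{p}{\imath^!}\sh{E}'}$ and set $\delta_g\colon\sh{E}\twoheadrightarrow\imath_*\pfun{p}{\imath^*}\sh{E}\xrightarrow{\imath_*g}\imath_*\pfun{p}{\imath^!}\sh{E}'\hookrightarrow\sh{E}'$. Then $\jmath^*\delta_g=0$, so $\varphi+\delta_g$ is again a morphism of $\sextf{S}{\sh{F}}$ with the same image under $\Theta$, and $\delta_g\neq0$ whenever $g\neq0$. Concretely, let $\sh{E}=\sh{E}'=\pfun{m}{\Xi}\sh{F}$ in \cref{ex:P1}. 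Then $\pfun{m}{\imath^*}\sh{E}\cong\pfun{m}{\imath^*}\jmath_*\sh{F}\cong\kk_{\pt}$ and $\pfun{m}{\imath^!}\sh{E}\cong\pfun{m}{\imath^!}\jmath_!\sh{F}\cong\kk_{\pt}$. So $\id$ and $\id+\delta_{\id}$ are distinct endomorphisms, both sent to $(\id,\id)$.

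The paper's faithfulness argument uses a different mechanism from yours. It makes $\im\epsilon$ $S$-small via \cref{prop:closed_under_quot_and_subs}, shows it is supported on $S$ since $\Pls\im\epsilon=0$, and concludes it is zero by \cref{small_no_summ_S}. But it also only treats a single $\epsilon$ with $\Theta(\epsilon)=0$. It never reaches the differences $\delta_g$: these are not morphisms of $\extf{S}{\sh{F}}$, and their images, here $\imath_*\kk_{\pt}$, are not $S$-small.

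\textbf{Fullness.} Your existence idea is sound, but the paper's mechanism is more specific than a cone on $\Plsh\sh{E}\to\Pls\sh{E}$. It takes $\sh{E}(\sh{A},\sh{B})=\mathrm{cone}(\widetilde{\epsilon_{\sh{A}}})$. It gets $f_{\sh{A}}\widetilde{\epsilon_{\sh{A}}}=\widetilde{\epsilon_{\sh{A}'}}\circ\imath_*\pfun{p}{\imath^*}f_{\sh{B}}[-1]$ from uniqueness of lifts along $\sh{A}'\to\pfun{p}{\jmath_{!*}}\sh{F}$; this is exactly where $\mathrm{Ext}^1$-vanishing in $\loc{S}$ is used. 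It then completes to a morphism of triangles.

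Your hope that the fill-in is unique fails for the reason above. Two fill-ins differ by a map $\sh{E}\to\imath_*\pfun{p}{\imath^*}\sh{B}\to\sh{A}'\to\sh{E}'$, governed by $\Mor{}{\imath_*\pfun{p}{\imath^*}\sh{B}}{\sh{A}'}\cong\Mor{\perv{p}{S}}{\pfun{p}{\imath^*}\sh{B}}{\pfun{p}{\imath^!}\sh{A}'}$. This is a degree-zero Hom, not the degree $-1$ group you expected, and it is killed neither by perversity, nor by $S$-smallness, nor by semisimplicity.

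\textbf{What survives.} Your argument and the paper's actually show that $\Theta$ is full and essentially surjective, with fibres on Hom-sets being torsors under these groups. By $S$-smallness, $\delta_g\delta_{g'}=0$. Hence every endomorphism in $\sextf{S}{\sh{F}}$ is unipotent, and $\Theta$ reflects isomorphisms. That recovers the bijection of \cref{thm:thmonetoonecorrsmallext}, but not an equivalence of categories.
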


As consequence of \cref{thmB}, we characterise the $S$-indecomposable perverse sheaves as the ones corresponding to extension pairs whose canonical morphism cannot be written as a direct sum, see \cref{cor_on_indecomp}.

We then use \cref{thmB} to characterise all indecomposable perverse sheaves $\sh{E}\in\perv{p}{X}$.

\begin{thmx}(\cref{thm:on:indec_exts})\label{thmC}
Let $X$ be a topologically stratified space, $S\subset X$ a closed stratum such that the category $\loc{S}$ is semisimple and $\sh{F}\in\perv{p}{X\smallsetminus S}$.

Isomorphism classes of indecomposable $p$-perverse sheaves on $X$ are classified by extensions by zero of indecomposable local systems on $S$ and extension pairs relative to $\sh{F}\in\perv{p}{X\smallsetminus S}$ whose canonical morphism cannot be written a direct sum.
\end{thmx}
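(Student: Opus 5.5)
The plan is to split an indecomposable $\sh{E}\in\perv{p}{X}$ according to whether it is $S$-small, and to treat each case with a result already available. Set $\sh{F}\coloneqq\imath_U^*\sh{E}$, where $U=X\smallsetminus S$, so that $\sh{E}\in\extf{S}{\sh{F}}$.

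\emph{Case $\sh{E}$ not $S$-small.} By \cref{small_no_summ_S}, $S$-small objects are exactly those with no direct summand supported on $S$. Hence $\sh{E}$ has a nonzero summand of the form $\imath_*\sh{L}[-p(S)]$ with $\sh{L}\in\loc{S}$. Since $\sh{E}$ is indecomposable, $\sh{E}\cong\imath_*\sh{L}[-p(S)]$. Full faithfulness of $\imath_*$ gives $\mathrm{End}(\sh{E})\cong\mathrm{End}(\sh{L})$, so $\sh{L}$ is indecomposable; in fact it is simple, because $\loc{S}$ is semisimple. Conversely, every indecomposable local system on $S$ pushes forward to an indecomposable perverse sheaf, and the pushforward determines $\sh{L}$ up to isomorphism.

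\emph{Case $\sh{E}$ $S$-small.} Here $\sh{E}\in\sextf{S}{\sh{F}}$, and \cref{thmB} (\cref{thm:equivalence_of_cat}) says $\Theta$ is an equivalence of additive categories, so it preserves and reflects indecomposability. The key step is to check that indecomposability in $\sextf{S}{\sh{F}}$ agrees with indecomposability in $\perv{p}{X}$.
\begin{itemize}
\item A summand of an $S$-small object is $S$-small, by \cref{small_no_summ_S} or by closure of $\sextf{S}{\sh{F}}$ under subobjects.
\item The only subtlety is that $\sextf{S}{\sh{F}}$ fixes $\sh{F}$. A decomposition $\sh{E}=\sh{E}_1\oplus\sh{E}_2$ in $\perv{p}{X}$ restricts to $\sh{F}=\sh{F}_1\oplus\sh{F}_2$, and $\sh{E}_i$ is an extension of $\sh{F}_i$ rather than of $\sh{F}$.
\item I would handle this as in \cref{cor_on_indecomp}: a direct sum decomposition of the canonical morphism $\Plsh\sh{E}\to\Pls\sh{E}$ of the extension pair corresponds to a splitting of $\sh{F}$ together with compatible splittings of both members of the pair. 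The statement should be read with $\sh{F}$ varying over $\perv{p}{X\smallsetminus S}$, and ``extension pair'' taken in the category $\ep{X\smallsetminus S}$, whose objects allow any $\sh{F}$.
\end{itemize}
Granting this, \cref{cor_on_indecomp} gives: $\sh{E}$ is indecomposable if and only if the canonical morphism of $(\Plsh\sh{E},\Pls\sh{E})$ is not a nontrivial direct sum. Isomorphism classes then match through \cref{thmA}.

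\emph{Disjointness and completeness.} The two classes do not overlap. Extensions by zero of nonzero local systems are not $S$-small, since $\pfun{p}{\imath^{!*}}$ is the identity on them. $S$-small objects, by definition, have no summand on $S$. Every indecomposable falls into one of the two cases, which gives the classification.

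\emph{Main obstacle.} I expect the hardest point to be showing that a decomposition of $\sh{E}$ in $\perv{p}{X}$ with both summands $S$-small really does induce a direct sum decomposition of the canonical morphism, and the converse. This requires $\Plsh$ and $\Pls$ to be additive and compatible with the splitting of $\sh{F}$ via $\pfun{p}{\imath_{U!*}}$. I would first verify that $\Plsh$, $\Pls$ and the canonical morphism of \cref{can morphism proj functors} are natural transformations between additive functors defined on all of $\perv{p}{X}$, so that they commute with finite direct sums.
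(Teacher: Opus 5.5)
Your case split is the route the paper intends. The paper states \cref{thm:on:indec_exts} without a separate proof, as a consequence of \cref{small_no_summ_S}, \cref{thm:equivalence_of_cat} and \cref{cor_on_indecomp}. Your non-$S$-small case and your disjointness check are fine. The genuine gap is the step you ``grant''. It is the whole content of the $S$-small case, and quoting \cref{thm:equivalence_of_cat} or \cref{cor_on_indecomp} (whose only justification is \cref{thm:equivalence_of_cat}) does not supply it.

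Morphisms in $\sextf{S}{\sh{F}}$ must restrict to $\id_{\sh{F}}$. So for $\sh{F}\neq0$ this category has no zero morphisms, and it does not contain the summand inclusions $\sh{E}_i\hookrightarrow\sh{E}$, since the $\sh{E}_i$ extend $\sh{F}_i$, not $\sh{F}$. Hence ``an equivalence preserves and reflects indecomposability'' says nothing about decompositions in $\perv{p}{X}$. For the same reason, closure of $\sextf{S}{\sh{F}}$ under sub-objects does not apply to summands; use additivity of $\pfun{p}{\imath^{!*}}$ instead.

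Moreover, $\Theta$ is not faithful. Take Beilinson's $\pfun{m}{\Xi}\sh{F}$ with $\unc{\sh{F}}\neq0$. The endomorphisms $\id$ and $\id+\beta_+\beta_-$ both restrict to $\id_{\sh{F}}$ and have the same image $(\id,\id)$ under $\Theta$, because $\beta_-\alpha_-=0=\alpha_+\beta_+$. Yet $\beta_+\beta_-\neq0$. In general $\Theta$ kills exactly the maps vanishing on $U$. Between $S$-small objects these factor as $\sh{E}\twoheadrightarrow\imath_*\pfun{p}{\imath^*}\sh{E}\to\imath_*\pfun{p}{\imath^!}\sh{E}'\hookrightarrow\sh{E}'$, and by $S$-smallness they form a square-zero ideal. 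So the classification survives, but this has to be argued rather than inherited.

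You can close the gap directly.

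\emph{First direction.} This is what you propose. $\Plsh$ and $\Pls$ are additive, and the canonical morphism $c$ is the composite of the natural maps $\Plsh\sh{E}\hookrightarrow\sh{E}\twoheadrightarrow\Pls\sh{E}$. So $\sh{E}=\sh{E}_1\oplus\sh{E}_2$ yields $c=c_1\oplus c_2$. Neither piece is trivial: $\Plsh\sh{E}_i=0$ would give $\sh{E}_i\cong\imath_*\pfun{p}{\imath^*}\sh{E}_i$ by \cref{SESs proj functors}, and an $S$-small object supported on $S$ is zero.

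\emph{Converse.} This does not follow from additivity.
\begin{enumerate}
\item After transporting along an isomorphism of arrows, suppose $c=c_1\oplus c_2$ with $c_i\colon\sh{A}_i\to\sh{B}_i$. Then $\alpha=\alpha_1\oplus\alpha_2$, $\epsilon_{\sh{B}}=\epsilon_{\sh{B}_1}\oplus\epsilon_{\sh{B}_2}$, and each $\ker\alpha_i$ is supported on $S$.
\item Let $\delta\colon\imath_*\pfun{p}{\imath^*}\sh{E}[-1]\to\Plsh\sh{E}$ be the class of $0\to\Plsh\sh{E}\to\sh{E}\to\imath_*\pfun{p}{\imath^*}\sh{E}\to0$. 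Push this sequence out along $\alpha$, whose kernel is $\imath_*\pfun{p}{\imath^!}\sh{E}$ by $S$-smallness. This gives $0\to\pfun{p}{\jmath_{!*}}\sh{F}\to\Pls\sh{E}\to\imath_*\pfun{p}{\imath^*}\sh{E}\to0$, so $\alpha\delta=\epsilon_{\sh{B}}$.
\item Hence the off-diagonal entries satisfy $\alpha_i\delta_{ij}=0$, so they factor through a morphism $\imath_*\pfun{p}{\imath^*}\sh{B}_j[-1]\to\ker\alpha_i$. That is an $\mathrm{Ext}^1$-class between objects of $\perv{p}{S}$, which vanishes by semisimplicity.
\item Therefore $\delta=\delta_1\oplus\delta_2$ and $\sh{E}\cong\mathrm{cone}(\delta_1)\oplus\mathrm{cone}(\delta_2)$. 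Each summand contains $\sh{A}_i$, so it is non-zero: $\sh{A}_i=0$ would force $\sh{B}_i$ to be supported on $S$ with $\pfun{p}{\imath^!}\sh{B}_i=0$, i.e.\ $c_i=0$.
\end{enumerate}
With both directions in place, your case analysis proves the theorem.
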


\cref{thmC} can be seen as a reduction result, as it implies that any indecomposable perverse sheaf is either of the form $\imath_*\sh{L}[-p(S)]$ for an indecomposable $\sh{L}\in\loc{S}$, or can be described in terms of a pair of perverse sheaves easier to understand, namely an object with no quotients and one with no sub-objects on $S$ respectively.  

We point out that although we do not require $\sh{F}\in\perv{p}{X\smallsetminus S}$ to be indecomposable, the non-splitting of the canonical morphism $\Plsh\sh{E}\to \Pls\sh{E}$ is guaranteed if $\sh{F}\in\perv{p}{X\smallsetminus S}$ is an indecomposable object, see \cref{rmk_non_splitting}. 

 In \cref{sec_on_examples} we discuss examples of spaces with two strata.

\subsection*{Acknowledgements} The author is grateful to Jon Woolf for his feedback on different drafts of this manuscript and for introducing him to the ``ice cream cones" featured in \cref{fig:M1}. The author was funded by MUR PNRR-Seal of Excellence, CUP B37G22000800006, by the project funded by NextGenerationEU under NRRP, Call PRIN 2022 No. 104 of February 2, 2022 of Italian Ministry of University and Research; Project 2022S97PMY Structures for Quivers, Algebras and Representations (SQUARE), and by the project LAVIE - Large views of small phenomena: decompositions, localizations and representation type - Fondo Italiano per la Scienza FIS-2021 funded by of Italian Ministry of University and Research. A.C is member of the ``National Group for Algebraic and Geometric Structures, and their Applications" (GNSAGA - INdAM).

\section{Background}\label{section:background}

In this section we recall the necessary background material needed in the rest of the paper.

\subsection{Topologically stratified spaces}
Throughout the paper, $X$ will denote a {\em topologically stratified space}, as introduced in \cite{GMIH1,GMIH2}. By dimension of a topological manifold we always mean real dimension, unless otherwise specified.

A {\em 0-dimensional topologically stratified space} is a discrete union of points with the discrete topology. Let $d\in\N$, a {\em $d$-dimensional topologically stratified space} $X$ is a paracompact Hausdorff topological space endowed with a finite filtration by closed subsets of the form
\begin{equation*}
X=X_d\supset X_{d-1}\supset\ldots X_0\supset X_{-1}=\emptyset
\end{equation*}
such that:
\begin{enumerate}[(i)]
    \item every $X_i\smallsetminus X_{i-1}$ is a (possibly empty) topological $i$-manifold; the connected components of $X_i\smallsetminus X_{i-1}$ are the $i$-dimensional {\em strata} of $X$,
    \item for every $x\in X_i\smallsetminus X_{i-1}$, there exist an open neighbourhood $U\subset X$ and a compact $(d-i-1)$-dimensional topologically stratified space $L$, {\em the link of the stratum} at $x$, with a filtration preserving homeomorphism $\phi\colon U\to\R^i\times C(L)$, where $C(L)=L \times [0, 1)/L \times \{0\}$ denotes the open cone on $L$.
 \end{enumerate}

\subsection{Constructible derived category}\label{constructible_der_cat}  Let $\kk$ be a field and $X$ a topologically stratified space. We denote by $\shf{X}$ the abelian category of sheaves of $\kk$-vector spaces on $X$. The {\em constructible derived category of $X$}, denoted by $\constr{c}{X}$, is the full triangulated subcategory of the bounded derived category $\der{b}{X}=\der{b}{{\shf{X}}}$ consisting of constructible complexes. That is, the objects of $\constr{c}{X}$ are complexes of $\kk$-vector spaces on $X$ with  locally-constant  cohomology sheaves on each stratum $S$ of $X$. Note that the constructible derived category is algebraic, that is it admits a dg-enhancement by \cite[Theorem A]{Canonaco_2022} and \cite[Remark 3.2]{Canonaco_2017}. This implies that taking cones in $\constr{c}{X}$ is functorial.

Let $\imath\colon Z\to X$ be the inclusion of a closed union of strata and $\jmath\colon U=X\smallsetminus Z\to X$ be the complementary open inclusion. There is a triangulated recollement, see \cite[\S1.4.1]{BBD}
\begin{equation}\begin{tikzcd}\label{equation:6ff}
\constr{c}{Z}\ar[rr,"\imath_*","\perp"']&&\constr{c}{X}\ar[ll,"\imath^!",bend left=30]\ar[ll,"\imath^*","\perp"', swap,bend right=40]\ar[rr,"\jmath^*","\perp"']&&\constr{c}{U}\ar[ll,"\jmath_*",bend left=30]\ar[ll,"\jmath_!","\perp"', swap,bend right=40]\end{tikzcd}.\end{equation}
where $\imath_*$ and $\jmath^*$ are exact functors. By abuse of notation, we will denote a functor and its left (respectively right) derived functor with the same letter as already done in \cref{equation:6ff}.

Let us consider the map to a point $\pi\colon X\to \pt$ and let us denote by $\kk_{\pt}$ the skyscraper sheaf with stalk concentrated in degree zero. The triangulated equivalence defined by $\dual(-)=\Mor{\constr{c}{X}}{-}{\pi^!\kk_{\pt}}\colon\constr{c}{X}^{\mathrm{op}}\to\constr{c}{X}$ is called {\em Verdier duality}. The functor $\dual$ commutes with the exact functors $\imath_*$ and $\jmath^*$ of \cref{equation:6ff}. Moreover, there are natural isomorphisms $\dual \imath^*=\imath^! \dual$ and $\dual \jmath_!=\jmath_* \dual$.

\subsection{Perverse t-structures and perverse functors}\label{perverse_def_funct} Let $\imath_S\colon S\to X$ be the inclusion of a stratum into $X$ and $p$ a {\em perversity on $X$}, that is a function $p\colon \{\hbox{strata of}\ X\} \to\Z$. The pair of subcategories
\begin{equation*}
\begin{split}
    {^p{D^{\leq0}}}&=\{\sh{A}\in\constr{c}{X}\mid \sh{H}^k(\imath_S^*\sh{A})=0 \quad k>p(S), \ \forall S\subset X \} \\
    {^p{D^{\geq0}}}&=\{\sh{A}\in\constr{c}{X}\mid \sh{H}^k(\imath_S^!\sh{A})=0 \quad k<p(S), \ \forall S\subset X \}
\end{split}
\end{equation*}
is a bounded t-structure on $\constr{c}{X}$, see \cite[2.1.4]{BBD}, called the $p$-perverse t-structure, where $\sh{H}^k(\sh{E})$ denotes the cohomology sheaf of the complex of sheaves $\sh{E}\in\constr{c}{X}$. 

The category of $p$-perverse sheaves on $X$ is defined as the {\em heart} of such t-structure, that is $\perv{p}{X}={^p{D^{\leq0}}}\cap{^p{D^{\geq0}}}$, hence it is an abelian subcategory of $\constr{c}{X}$. 

A perverse  heart $\perv{p}{X}\subset \constr{c}{X}$ is {\em faithful} if there is a realisation functor $\der{b}{\perv{p}{X}}\to\constr{c}{X}$ which is an equivalence

The triangulated recollement \cref{equation:6ff} descends to an abelian recollement
\begin{equation}\begin{tikzcd}\label{equation:6ffab}
\perv{p}{Z}\ar[rr,"\imath_*","\perp"']&&\perv{p}{X}\ar[ll,"^p\imath^!",bend left=30]\ar[ll,"^p\imath^*","\perp"', swap,bend right=40]\ar[rr,"\jmath^*","\perp"']&&\perv{p}{U}\ar[ll,"\jmath_*",bend left=30]\ar[ll,"^p\jmath_!","\perp"', swap,bend right=40]\end{tikzcd}.
\end{equation}

The functors which appear in \cref{equation:6ffab} are called {\em perverse functors} and are defined by setting $^pF={^pH^0} F \epsilon$, where $F\in\{\imath^*,\imath_*,\imath^!,\jmath_!,\jmath^*,\jmath_* \}$ and $^pH^0\colon\constr{c}{Y}\to\perv{p}{Y}$ for $Y\in\{Z,X,U\}$ is the cohomological functor left inverse to the inclusion $\epsilon\colon\perv{p}{Y}\to\constr{c}{Y}$ of the perverse heart into the constructible derived category, see \cite[\S1.3.6]{BBD}.

Verdier duality restricts to an exact equivalence $\dual\colon\perv{p}{X}^{\mathrm{op}}\to\perv{p^*}{X}$, where $p^*$ is the {\em dual perversity} defined as $p^*(S)=-\dim_{\mathbb R}(S)-\dim(S)$, for any stratum $S\subset X$, see \cite[\S2.1.16]{BBD}.

\subsubsection{Extension functors.}\label{extension_functors}  In the rest of the paper we will say that $\sh{E}\in\constr{c}{X}$ is an {\em extension of} $\sh{F}\in\constr{c}{U}$ if $\jmath^*\sh{E}\cong\sh{F}$. We recall some well-known facts on extensions of perverse sheaves from $U$ to $X$. 

Let $\sh{F}\in\perv{p}{U}$, the object $\pfun{p}{\jmath_!}\sh{F}\in\perv{p}{X}$ is unique extension $\sh{E}$ of $\sh{F}$ in $\constr{c}{X}$ such that $\imath^*\sh{E}\in{^p{D^{\leq-2}}(Z)}$ and $\imath^!\sh{E}\in{^p{D^{\geq0}}(Z)}$, see \cite[Proposition 1.4.23]{BBD}. Similarly, $\pfun{p}{\jmath_*}\sh{F}$ is the unique extension $\sh{E'}$ of $\sh{F}$ in $\constr{c}{X}$ such that $\imath^*\sh{E'}\in{^p{D^{\leq0}}(Z)}$ and $\imath^!\sh{E'}\in{^p{D^{\geq2}}(Z)}$.  

The {\em intermediate extension functor} $\pfun{p}{\jmath_{!*}}\colon\perv{p}{U}\to\perv{p}{X}$ is defined, see \cite[Definition 1.4.22]{BBD}, as the image of the natural morphism $\pfun{p}{\jmath_!}\to \pfun{p}{\jmath_*}$, that is $\pfun{p}{\jmath_{!*}}:=\im(\pfun{p}{\jmath_{!}}\to \pfun{p}{\jmath_{*}})$. Let $\sh{F}\in\perv{p}{U}$, the intermediate extension $\pfun{p}{\jmath_{!*}}\sh{F}\in\perv{p}{X}$ of $\sh{F}$ is the unique extension $\sh{E''}$ of $\sh{F}$ in $\constr{c}{X}$ such that $\imath^*\sh{E''}\in{^p{D^{\leq-1}}(Z)}$ and $\imath^!\sh{E}\in{^p{D^{\geq1}}(Z)}$, see \cite[Corollaire 1.4.24]{BBD}. By \cite[Corollaire 1.4.25]{BBD}, $\pfun{p}{\jmath_{!*}}\sh{F}\in\perv{p}{X}$ of $\sh{F}\in\perv{p}{U}$ is also characterised as the unique extension of $\sh{F}$ with no non-zero sub-object and quotient supported on $Z$. In general, the intermediate extension functor is not exact.

\subsubsection{Restriction functors.}\label{restriction_functors} Let $Z$ be a closed union of strata. The next well-known result gives an algebraic interpretation of the functors in \cref{equation:6ffab} which are left and right adjoints to the closed inclusion $\imath_*$.

\begin{lemma}\label{max_sub_quot_on_Z}
Let $X$ be a topologically stratified space and $p$ a perversity on it. Let $\imath\colon Z\to X$ be the inclusion of a closed union of strata into $X$ and consider $\sh{E}\in\perv{p}{X}$. Then, $\imath_*\pfun{p}{\imath^!}\sh{E}\to \sh{E}$ is the inclusion of the maximal sub-object of $\sh{E}$ supported on $Z$ and $\sh{E}\to\imath_*\pfun{p}{\imath^*}\sh{E}$ is the projection on the maximal quotient of $\sh{E}$ supported on $Z$.
\end{lemma}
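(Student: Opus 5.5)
The plan is to deduce everything from the adjunctions in the abelian recollement \cref{equation:6ffab}, namely $\pfun{p}{\imath^*}\dashv\imath_*\dashv\pfun{p}{\imath^!}$, together with the fact that $\imath_*\colon\perv{p}{Z}\to\perv{p}{X}$ is fully faithful. Its essential image is the full subcategory of perverse sheaves supported on $Z$, i.e. those $\sh{G}$ with $\jmath^*\sh{G}=0$, and this subcategory is closed under sub-objects and quotients. I treat the statement about $\pfun{p}{\imath^!}$ first; the one about $\pfun{p}{\imath^*}$ is dual, obtained either by reversing all arrows in the argument or by applying Verdier duality $\dual$, which exchanges $\imath^*$ and $\imath^!$ and the perversities $p$ and $p^*$.

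\emph{Step 1: the counit is a monomorphism.} Let $\varepsilon\colon\imath_*\pfun{p}{\imath^!}\sh{E}\to\sh{E}$ be the counit, and let $K=\ker\varepsilon$ in $\perv{p}{X}$. Since $K$ is a sub-object of an object supported on $Z$, we have $K\cong\imath_*K'$ for some $K'\in\perv{p}{Z}$. The inclusion $\imath_*K'\to\imath_*\pfun{p}{\imath^!}\sh{E}$ comes, by full faithfulness, from a morphism $u\colon K'\to\pfun{p}{\imath^!}\sh{E}$. Composing with $\varepsilon$ gives zero, and by the adjunction bijection $\Mor{}{K'}{\pfun{p}{\imath^!}\sh{E}}\cong\Mor{}{\imath_*K'}{\sh{E}}$, $u\mapsto\varepsilon\circ\imath_*u$, this forces $u=0$. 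Hence $K=0$ and $\varepsilon$ is a monomorphism.

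\emph{Step 2: maximality.} Let $\sh{G}\subset\sh{E}$ be any sub-object supported on $Z$. Write $\sh{G}\cong\imath_*G'$; then the inclusion $\imath_*G'\to\sh{E}$ factors through $\varepsilon$ by the universal property of the counit. So every sub-object of $\sh{E}$ supported on $Z$ is contained in $\imath_*\pfun{p}{\imath^!}\sh{E}$, which is itself such a sub-object by Step 1. This proves the first claim. The statement for $\pfun{p}{\imath^*}$ follows in the same way: the unit $\sh{E}\to\imath_*\pfun{p}{\imath^*}\sh{E}$ is an epimorphism, and every quotient of $\sh{E}$ supported on $Z$ factors through it.

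The main point to check carefully is the identification used in Step 1: that the essential image of $\imath_*$ on perverse hearts is exactly the perverse sheaves with $\jmath^*=0$, and that it is closed under sub-objects and quotients. This follows from the exactness of $\jmath^*$ on $\perv{p}{X}$, from the recollement triangle $\jmath_!\jmath^*\sh{G}\to\sh{G}\to\imath_*\imath^*\sh{G}$ (which shows $\sh{G}\cong\imath_*\imath^*\sh{G}$ when $\jmath^*\sh{G}=0$), and from the t-exactness of $\imath_*$ (\cite[\S1.4]{BBD}). Everything else is formal adjunction yoga.
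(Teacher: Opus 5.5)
The paper gives no proof of this lemma. It is quoted as a well-known consequence of the recollement formalism (cf.\ \cite[\S1.4]{BBD}), so there is no argument in the paper to compare yours with. Your proof is correct and complete:
\begin{itemize}
\item Writing $\ker\varepsilon\cong\imath_*K'$ and using that $u\mapsto\varepsilon\circ\imath_*u$ is the adjunction bijection shows that $\varepsilon$ is monic.
\item Maximality is exactly the universal property of the counit.
\item The dual statement follows by reversing arrows, using $\pfun{p}{\imath^*}\dashv\imath_*$ and closure under quotients.
\item The one non-formal input is that $\imath_*$ identifies $\perv{p}{Z}$ with the Serre subcategory $\ker(\jmath^*)$ of $\perv{p}{X}$, and you justify it adequately.
\end{itemize}

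For comparison, the route implicit in the paper (see the proof of \cref{preserving mono/epic}) is triangulated. One takes perverse cohomology of $\imath_*\imath^!\sh{E}\to\sh{E}\to\jmath_*\jmath^*\sh{E}\to$. Since $\jmath_*$ is left t-exact, this gives an exact sequence $0\to\imath_*\pfun{p}{\imath^!}\sh{E}\to\sh{E}\to\pfun{p}{\jmath_*}\jmath^*\sh{E}$. Maximality follows because $\Mor{}{\imath_*G'}{\pfun{p}{\jmath_*}\jmath^*\sh{E}}\cong\Mor{}{\jmath^*\imath_*G'}{\jmath^*\sh{E}}=0$ for every $G'\in\perv{p}{Z}$, so any sub-object supported on $Z$ lies in the kernel.

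That version also yields $\sh{E}/\imath_*\pfun{p}{\imath^!}\sh{E}\cong\Pls\sh{E}$ for free, which is what \cref{SESs proj functors} uses. Your argument is purely abelian and formal: it avoids the derived category altogether and holds verbatim in any abelian recollement.
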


Similarly to how one defines the intermediate extension functor, we define the {\em intermediate restriction functor} $\pfun{p}{\imath^{!*}}\colon\perv{p}{X}\to\perv{p}{S}$ as the image of the natural morphism between the maximal sub-object and maximal quotient supported on $Z$, that is $\pfun{p}{\imath^{!*}}:=\im(\pfun{p}{\imath^!}\to\pfun{p}{\imath^*})$. The intermediate restriction functor is not exact in general.

\subsubsection{Beilinson's maximal extension.}\label{Beilinson_max_ext} We recall the definition, due to Beilinson, of yet another important extension functor - see \cite{MR923134} and \cite{MR2671769} for more details. 

In this section, we let $X$ be an algebraic variety and $m$ the middle perversity on it, defined by $m(S)=-\dim_{\mathbb C}S$ for any stratum $S\subset X$. Let $f\colon X\to \C$ be a regular function, denote by $X_0=f^{-1}(0)$ and $U=X\smallsetminus X_0$. Consider the diagram
\begin{equation*}
    \begin{tikzcd}
      X_0 \arrow{d}\arrow{r}{\imath} & X\arrow{d}{f} & U\arrow{d}\arrow{l}[swap]{\jmath} & \widetilde{U}\arrow{d}\arrow{l}[swap]{v}\\
      \{0\}\arrow{r} & \C & \C^*\arrow{l} & \widetilde{\C^*}\arrow{l}[swap]{u}
    \end{tikzcd}.
\end{equation*}
Then, the {\em nearby cycle functor} is defined as $\Psi_f:=\imath^*\jmath_*v_*v^*\colon \constr{c}{U}\to\constr{c}{X_0}$.

For any $\sh{G}\in\constr{c}{X}$, the fundamental group $\pi_1(\C^*)$ acts on $v^*\sh{G}$ via deck transformations of $\widetilde{\C^*}$ and hence it acts on $\Psi_f$. By \cite[Lemma 1.1]{MR2671769}, there is a unique decomposition of $\Psi_f$ as $\uncc\oplus\nuncc$ where any generator $t$ of $\pi_1(\C^*)$ is such that $1-t$ acts nilpotently on $\uncc(\sh{G})$ for any $\sh{G}\in\constr{c}{X}$ and is an automorphism of $\nuncc$. One usually refers to $\uncc$ as the {\em unipotent nearby cycle functor}.

Let $\sh{F}\in\perv{m}{U}$, {\em Beilinson's maximal extension} $\pfun{m}{\Xi}\sh{F}\in\perv{m}{X}$ is defined as the unique (up to isomorphism) perverse sheaf on $X$ for which there are two short exact sequences, see \cite[Proposition 3.1]{MR2671769},
\begin{equation*}
        0\to\jmath_!\sh{F}\xrightarrow{\alpha_-}\pfun{m}{\Xi}\sh{F}\xrightarrow{\beta_-} \imath_*\unc{\sh{F}}\to 0 \quad \hbox{and} \quad
        0\to\imath_*\unc{\sh{F}}\xrightarrow{\beta_+}  \pfun{m}{\Xi}\sh{F}\xrightarrow{\alpha_+} \jmath_*\sh{F}\to 0
\end{equation*}
where the composition $\alpha_+\alpha_-\colon\jmath_!\sh{F}\to\jmath_*\sh{F}$ is the natural morphism and we have $\beta_-\beta_+=1-t\colon\imath_*\unc{\sh{F}}\to \imath_*\unc{\sh{F}}$. 

The above definition of Beilinson's maximal extension implies automatically that $\pfun{m}{\Xi}$ is a functor and that it commutes with Verdier duality. We refer to \cite[pag.112]{MR2671769} for a detailed explanation on the maximality of Beilinson's extension.

\section{Extensions over a closed stratum}\label{section:small_extensions}

Let $X$ be a topologically stratified space and $S\subset X$ a closed stratum. We denote by $\imath\colon S\to X$ the closed inclusion of $S$ into $X$ and by $\jmath\colon U=X\smallsetminus S\to X$ the complementary open inclusion. In the rest of the paper $p$ is any perversity on $X$.

\subsection{Extensions over a closed stratum}\label{extf} Let us fix a perverse sheaf $\sh{F}\in\perv{p}{U}$. In order to study its extensions over the closed stratum $S$, we consider the category $\extf{S}{\sh{F}}$ of {\em extensions of $\sh{F}$} whose
\begin{itemize}
\item objects are pairs $(\sh{E},\phi)$, where $\sh{E}\in\perv{p}{X}$ and $\phi\colon\jmath^*\sh{E}\to \sh{F}$ is an isomorphism in $\perv{p}{U}$;
\item morphisms are maps $\epsilon\colon\sh{E}\to\sh{E}'$ making the diagram
\[
\begin{tikzcd}
\jmath^*\sh{E} \ar[rr,"\jmath^*\epsilon"]\ar[dr,swap,"\phi"] && \jmath^*\sh{E}'\ar[dl,"\phi'"]\\
& \sh{F}
\end{tikzcd}
\]
commute.
\end{itemize}

Recall that an {\em initial object} (respectively {\em terminal object}) of a category $\cat{C}$ is an object $\sh{I}\in\cat{C}$ (respectively $\sh{T}\in\cat{C}$) such there exists a unique morphism $\sh{I}\to\sh{C}$ (respectively $\sh{C}\to\sh{T}$) for any object $\sh{C}\in\cat{C}$. The adjunctions in \cref{equation:6ffab} give the following characterisation of two particular extensions of $\sh{F}\in\perv{p}{U}$.

\begin{lemma}
Let $\sh{F}$ be a fixed perverse sheaf in $\perv{p}{U}$. Then, $\pfun{p}{\jmath_!}\sh{F}\in\perv{p}{X}$ is initial in $\extf{S}{\sh{F}}$ and $\pfun{p}{\jmath_*}\sh{F}\in\perv{p}{X}$ is final in $\extf{S}{\sh{F}}$.
\end{lemma}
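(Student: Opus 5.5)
The plan is to use the adjunctions $\pfun{p}{\jmath_!}\dashv\jmath^*\dashv\pfun{p}{\jmath_*}$ from the abelian recollement \cref{equation:6ffab}, together with the fact that the unit $\sh{F}\to\jmath^*\pfun{p}{\jmath_*}\sh{F}$ and the counit $\jmath^*\pfun{p}{\jmath_!}\sh{F}\to\sh{F}$ are isomorphisms (because $\jmath^*\pfun{p}{\jmath_*}\cong\id\cong\jmath^*\pfun{p}{\jmath_!}$ in a recollement). The first step is to record that $\pfun{p}{\jmath_!}\sh{F}$ is an object of $\extf{S}{\sh{F}}$: pair it with the counit isomorphism $\varepsilon_{\sh{F}}\colon\jmath^*\pfun{p}{\jmath_!}\sh{F}\to\sh{F}$. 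Dually, $\pfun{p}{\jmath_*}\sh{F}$ is paired with $\eta_{\sh{F}}^{-1}\colon\jmath^*\pfun{p}{\jmath_*}\sh{F}\to\sh{F}$, the inverse of the unit.

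For initiality, I will fix an object $(\sh{E},\phi)$ and use the adjunction bijection
\[
\Mor{\perv{p}{X}}{\pfun{p}{\jmath_!}\sh{F}}{\sh{E}}\cong\Mor{\perv{p}{U}}{\sh{F}}{\jmath^*\sh{E}},\qquad \epsilon\mapsto \jmath^*\epsilon\circ\varepsilon_{\sh{F}}^{-1}.
\]
Here I use the standard description of the adjunction bijection via the counit, composed with the identification $\sh{F}\cong\jmath^*\pfun{p}{\jmath_!}\sh{F}$; the key point to check is precisely that this bijection is $\epsilon\mapsto\jmath^*\epsilon$ up to this fixed isomorphism. A morphism $\epsilon$ lies in $\extf{S}{\sh{F}}$ exactly when $\phi\circ\jmath^*\epsilon=\varepsilon_{\sh{F}}$, which is equivalent to $\jmath^*\epsilon\circ\varepsilon_{\sh{F}}^{-1}=\phi^{-1}$. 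Since $\phi$ is an isomorphism, $\phi^{-1}$ is a well-defined morphism $\sh{F}\to\jmath^*\sh{E}$, so by the bijection there is exactly one such $\epsilon$. This gives existence and uniqueness at the same time.

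Finality is the dual argument, using $\Mor{\perv{p}{X}}{\sh{E}}{\pfun{p}{\jmath_*}\sh{F}}\cong\Mor{\perv{p}{U}}{\jmath^*\sh{E}}{\sh{F}}$, $\epsilon\mapsto\eta_{\sh{F}}^{-1}\circ\jmath^*\epsilon$. The compatibility condition becomes $\eta_{\sh{F}}^{-1}\circ\jmath^*\epsilon=\phi$, which has a unique solution. Alternatively, one could deduce finality from initiality by Verdier duality, which exchanges $\pfun{p}{\jmath_!}$ and $\pfun{p}{\jmath_*}$, but the direct argument is cleaner.

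The main obstacle is the mild bookkeeping needed to identify the hom-bijection of the adjunction with $\epsilon\mapsto\jmath^*\epsilon$ composed with the (co)unit. This is a general fact for an adjunction whose (co)unit is an isomorphism on the relevant object. Everything else is formal.
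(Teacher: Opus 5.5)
Your argument is correct and is the same as the paper's, which simply appeals to the adjunctions $\pfun{p}{\jmath_!}\dashv\jmath^*\dashv\pfun{p}{\jmath_*}$ of the abelian recollement. The hom-set bijections you use are the standard formulas (apply $\jmath^*$, then compose with the unit, respectively the counit), so the ``bookkeeping'' you flag is automatic. One harmless slip: $\sh{F}\to\jmath^*\pfun{p}{\jmath_!}\sh{F}$ is the unit of $\pfun{p}{\jmath_!}\dashv\jmath^*$, and $\jmath^*\pfun{p}{\jmath_*}\sh{F}\to\sh{F}$ is the counit of $\jmath^*\dashv\pfun{p}{\jmath_*}$, so your unit/counit labels are swapped; this changes nothing since both maps are isomorphisms.
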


Important properties of morphisms between objects in $\perv{p}{X}$ which are extensions of a fixed perverse sheaf $\sh{F}\in\perv{p}{U}$ are preserved by the restriction functors introduced in \cref{restriction_functors}.

\begin{lemma}\label{preserving mono/epic}
The functors $\pfun{p}{\imath^!}$ and $\pfun{p}{\imath^*}$ preserve monomorphisms and epimorphisms in $\extf{S}{\sh{F}}$.
\end{lemma}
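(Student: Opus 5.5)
The idea is to use that a morphism in $\extf{S}{\sh{F}}$ is an isomorphism after applying $\jmath^*$, so its kernel and cokernel are supported on $S$. I then combine this with the exactness properties of $\pfun{p}{\imath^*}$ and $\pfun{p}{\imath^!}$ and with $\imath^*\imath_*\cong\id\cong\imath^!\imath_*$.

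Let $\epsilon\colon(\sh{E},\phi)\to(\sh{E}',\phi')$ be a morphism in $\extf{S}{\sh{F}}$. Then $\phi'\circ\jmath^*\epsilon=\phi$, so $\jmath^*\epsilon$ is an isomorphism. Since $\jmath^*\colon\perv{p}{X}\to\perv{p}{U}$ is exact, $\jmath^*\ker\epsilon=0=\jmath^*\coker\epsilon$. The recollement \cref{equation:6ffab} then gives $\ker\epsilon\cong\imath_*\sh{K}$ and $\coker\epsilon\cong\imath_*\sh{C}$ for some $\sh{K},\sh{C}\in\perv{p}{S}$.

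Two of the four claims are formal. The functor $\pfun{p}{\imath^*}$ is left adjoint to $\imath_*$, hence right exact, so it preserves epimorphisms. Dually, $\pfun{p}{\imath^!}$ is right adjoint to $\imath_*$, hence left exact, so it preserves monomorphisms.

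\textbf{Monomorphisms under $\pfun{p}{\imath^*}$.} Suppose $\epsilon$ is a monomorphism, so there is a short exact sequence $0\to\sh{E}\to\sh{E}'\to\imath_*\sh{C}\to0$ in $\perv{p}{X}$. This is a distinguished triangle in $\constr{c}{X}$. Apply the triangulated functor $\imath^*$ and take the long exact sequence of perverse cohomology on $S$:
\begin{equation*}
\pcoh{-1}{\imath^*\imath_*\sh{C}}\to \pfun{p}{\imath^*}\sh{E}\to\pfun{p}{\imath^*}\sh{E}'\to\pfun{p}{\imath^*}\imath_*\sh{C}\to 0,
\end{equation*}
where the final $0$ holds because $\imath^*$ is right t-exact. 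Since $\imath^*\imath_*\cong\id$, the first term is $\pcoh{-1}{\sh{C}}=0$, as $\sh{C}$ is perverse. Therefore $\pfun{p}{\imath^*}\epsilon$ is a monomorphism.

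\textbf{Epimorphisms under $\pfun{p}{\imath^!}$.} Dually, suppose $\epsilon$ is an epimorphism with kernel $\imath_*\sh{K}$. Apply $\imath^!$, which is left t-exact, to the triangle $\imath_*\sh{K}\to\sh{E}\to\sh{E}'$. This gives the exact sequence $0\to \pfun{p}{\imath^!}\imath_*\sh{K}\to\pfun{p}{\imath^!}\sh{E}\to\pfun{p}{\imath^!}\sh{E}'\to\pcoh{1}{\imath^!\imath_*\sh{K}}$. The last term is $\pcoh{1}{\sh{K}}=0$, so $\pfun{p}{\imath^!}\epsilon$ is an epimorphism. Alternatively, this case follows from the previous one by Verdier duality, which exchanges $\imath^*$ and $\imath^!$.

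The only point needing care is identifying $\ker\epsilon$ and $\coker\epsilon$ as objects of the form $\imath_*(-)$, which is where the hypothesis that $\epsilon$ lies in $\extf{S}{\sh{F}}$ enters. After that, everything is the standard long exact sequence together with the vanishing of $\pcoh{\pm1}{-}$ on perverse objects.
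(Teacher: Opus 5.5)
Your proof is correct, but it takes a different route from the paper's.

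\textbf{The paper's route.} The paper proves only the $\pfun{p}{\imath^!}$ statements and gets the $\imath^*$ ones by duality. It takes perverse cohomology of the recollement triangles $\imath_*\imath^!\sh{G}\to\sh{G}\to\jmath_*\jmath^*\sh{G}\cong\jmath_*\sh{F}$ and $\imath_*\imath^!\sh{H}\to\sh{H}\to\jmath_*\sh{F}$. This exhibits $\pfun{p}{\imath^!}\sh{G}$ and $\pfun{p}{\imath^!}\sh{H}$ as kernels of the maps to the common object $\pfun{p}{\jmath_*}\sh{F}$. Preservation of monomorphisms then follows from $\gamma\, i_{\sh{G}}=i_{\sh{H}}\,\pfun{p}{\imath^!}\gamma$. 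Preservation of epimorphisms follows from a diagram chase that uses that the induced map on $\pfun{p}{\jmath_*}\sh{F}$ is the identity.

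\textbf{Your route.} You first identify $\ker\epsilon$ and $\coker\epsilon$ as lying in the image of $\imath_*$. Two of the four claims then follow from half-exactness of the adjoints $\pfun{p}{\imath^*}\dashv\imath_*\dashv\pfun{p}{\imath^!}$. For the other two, you apply $\imath^*$ or $\imath^!$ to the short exact sequence and use $\imath^*\imath_*\cong\id\cong\imath^!\imath_*$ together with the vanishing of $\pcoh{\pm1}{-}$ on $\perv{p}{S}$. This replaces the diagram chase by long exact sequences alone.

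\textbf{What each buys.} Your version shows exactly where membership in $\extf{S}{\sh{F}}$ is needed. Preservation of epimorphisms by $\pfun{p}{\imath^*}$ and of monomorphisms by $\pfun{p}{\imath^!}$ holds for arbitrary morphisms of $\perv{p}{X}$. The other two claims only need $\coker\epsilon$ (resp.\ $\ker\epsilon$) to be supported on $S$, i.e.\ $\jmath^*\epsilon$ epi (resp.\ mono). This sharpens the paper's remark following the corollary, which says it is crucial that the objects extend a fixed $\sh{F}$. The paper's version avoids identifying kernels and cokernels through the recollement. Instead it keeps everything relative to the fixed terminal extension $\pfun{p}{\jmath_*}\sh{F}$, which matches how $\extf{S}{\sh{F}}$ is used in the rest of the paper.
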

\begin{proof}
The two statements are dual to each other, therefore we only prove the first claim. Let $\sh{G},\sh{H}\in\extf{S}{\sh{F}}$ and consider a morphism $\gamma\in\Mor{\extf{S}{\sh{F}}}{\sh{G}}{\sh{H}}$. We have two triangles
\begin{equation*}
\imath_*\imath^!\sh{G}\to\sh{G}\to\jmath_*\jmath^*\sh{G}\cong\jmath_*\sh{F}\to\imath^!\sh{G}[1]\quad \hbox{and} \quad
\imath_*\imath^!\sh{H}\to\sh{H}\to\jmath_*\jmath^*\sh{H}\cong\jmath_*\sh{F}\to\imath^!\sh{H}[1]
\end{equation*}
in $\constr{c}{X}$. Considering perverse cohomology yields the diagram
\begin{equation*}\begin{tikzcd} 
0\cong\pcoh{-1}{\jmath_*\sh{F}}\ar[d,equal]\ar[r] & \pfun{p}{\imath^!}\sh{G}\ar[d,swap,"\pfun{p}{\imath^!}\gamma"]\ar[r,hook,"\imath_{\sh{G}}"] & \sh{G}\ar[r]\ar[d,"\gamma"] & \pfun{p}{\jmath_*}\sh{F} \ar[d,equal]\ar[r]& \pcoh{1}{\imath^!\sh{G}}\ar[r]\ar[d] &\ldots\\
0\cong\pcoh{-1}{\jmath_*\sh{F}}\ar[r] & \pfun{p}{\imath^!}\sh{H}\ar[r,hook,"\imath_{\sh{H}}"]  & \sh{H}\ar[r] & \pfun{p}{\jmath_*}\sh{F}\ar[r] & \pcoh{1}{\imath^!\sh{H}}\ar[r] &\ldots
\end{tikzcd}\end{equation*}

If $\gamma$ is monic, then so is $\gamma i_{\sh{G}}=i_{\sh{H}} \pfun{p}{\imath^!}\gamma$ and hence $\pfun{p}{\imath^!}\gamma$ is monic. If $\gamma$ is epic, by diagram chase, also $\pfun{p}{\imath^!}\gamma$ is epic.
\end{proof}

The following result directly follows from the definition of the intermediate restriction functor in \cref{restriction_functors} and \cref{preserving mono/epic}.

\begin{corollary}\label{intermediate restr preserves}
The intermediate restriction functor $\pfun{p}{\imath^{!*}}$ preserves monomorphisms and epimorphisms in $\extf{S}{\sh{F}}$. 
\end{corollary}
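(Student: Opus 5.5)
Given a morphism $\gamma\colon\sh{G}\to\sh{H}$ in $\extf{S}{\sh{F}}$, we want to show that $\pfun{p}{\imath^{!*}}\gamma$ is monic (respectively epic) whenever $\gamma$ is. The plan is to write $\pfun{p}{\imath^{!*}}$ as the image of the natural transformation $\pfun{p}{\imath^!}\to\pfun{p}{\imath^*}$ and use naturality. The natural transformation gives a commutative square
\[
\begin{tikzcd}
\pfun{p}{\imath^!}\sh{G}\ar[r,"a_{\sh{G}}"]\ar[d,swap,"\pfun{p}{\imath^!}\gamma"] & \pfun{p}{\imath^*}\sh{G}\ar[d,"\pfun{p}{\imath^*}\gamma"]\\
\pfun{p}{\imath^!}\sh{H}\ar[r,"a_{\sh{H}}"] & \pfun{p}{\imath^*}\sh{H}
\end{tikzcd}
\]
in the abelian category $\perv{p}{S}$. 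The morphism $\pfun{p}{\imath^{!*}}\gamma$ is the map induced on images: factor $a_{\sh{G}}=m_{\sh{G}}e_{\sh{G}}$ with $e_{\sh{G}}$ epic onto $\pfun{p}{\imath^{!*}}\sh{G}$ and $m_{\sh{G}}$ monic, and similarly for $\sh{H}$. By the uniqueness of image factorisations there is a unique $\delta\colon\pfun{p}{\imath^{!*}}\sh{G}\to\pfun{p}{\imath^{!*}}\sh{H}$ satisfying
\[
\delta e_{\sh{G}}=e_{\sh{H}}\,\pfun{p}{\imath^!}\gamma \quad\text{and}\quad m_{\sh{H}}\delta=\pfun{p}{\imath^*}\gamma\, m_{\sh{G}}.
\]

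\emph{Monic case.} If $\gamma$ is monic, \cref{preserving mono/epic} shows that $\pfun{p}{\imath^*}\gamma$ is monic. Then $m_{\sh{H}}\delta=\pfun{p}{\imath^*}\gamma\,m_{\sh{G}}$ is a composite of monomorphisms, hence monic. It follows that $\delta$ is monic, since if $\delta x=0$ then $m_{\sh{H}}\delta x=0$, so $x=0$.

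\emph{Epic case.} Dually, if $\gamma$ is epic, \cref{preserving mono/epic} shows that $\pfun{p}{\imath^!}\gamma$ is epic. Then $\delta e_{\sh{G}}=e_{\sh{H}}\,\pfun{p}{\imath^!}\gamma$ is a composite of epimorphisms, hence epic, and so $\delta$ is epic.

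The only point needing care is that each case uses the correct half of the factorisation: monicity is transported through the target functor $\pfun{p}{\imath^*}$, and epicity through the source functor $\pfun{p}{\imath^!}$. Since \cref{preserving mono/epic} provides both properties for both functors, there is no real obstacle. The argument is a formal lemma about images of natural transformations between functors that preserve monos (respectively epis).
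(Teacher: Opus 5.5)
Your proof is correct and follows the paper's approach: the paper only remarks that the corollary follows directly from the definition of $\pfun{p}{\imath^{!*}}$ as the image of $\pfun{p}{\imath^!}\to\pfun{p}{\imath^*}$ together with \cref{preserving mono/epic}. You spell out that image-functoriality argument explicitly, transporting monicity through $\pfun{p}{\imath^*}$ and epicity through $\pfun{p}{\imath^!}$.
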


\begin{remark}
Note that in the proof of \cref{preserving mono/epic} it is crucial that one considers objects in $\extf{S}{\sh{F}}$ for a fixed perverse sheaf $\sh{F}\in\perv{p}{U}$ as this implies $\pfun{p}{\jmath_*}\jmath^*\sh{G}\cong\pfun{p}{\jmath_*}\jmath^*\sh{H}\cong\pfun{p}{\jmath_*}\sh{F}$. The situation described in \cref{preserving mono/epic}, and hence in \cref{intermediate restr preserves}, is not necessarily true if one considers morphisms between objects which are extensions of different objects in $\perv{p}{U}$. 
\end{remark}

\subsection{Projection functors}\label{section:ProjfuncPlshPls}
We now define two additive functors
\begin{equation}\label{def proj functors}\begin{split}
&\Plsh:=\im(\pfun{p}{\jmath_!}\jmath^*\to\id)\colon\perv{p}{X}\to\perv{p}{X}\\
&\Pls:=\im(\id\to\pfun{p}{\jmath_*}\jmath^*)\colon\perv{p}{X}\to\perv{p}{X}.
\end{split}\end{equation} 
We will refer to the two functors in \cref{def proj functors} as \defn{projection functors}. We note that if $\sh{E}\in\extf{S}{\sh{F}}$, there is a canonical morphism
\begin{equation}\label{can morphism proj functors}\begin{tikzcd} 
\Plsh\sh{E}\ar[rr,swap,"\beta\alpha"]\ar[dr,two heads,swap,"\alpha"]&&\Pls\sh{E}\\
&\pfun{p}{\jmath_{!*}}\sh{F}\ar[ur,hook,swap,"\beta"]
\end{tikzcd}.\end{equation}

The morphism \cref{can morphism proj functors} will play an important role in the rest.

Given a perverse sheaf $\sh{E}\in\perv{p}{X}$, the objects $\Plsh\sh{E}$ and $\Pls\sh{E}$ appear in some particular short exact sequences. 

\begin{lemma}\label{SESs proj functors}
Let $\sh{E}\in\perv{p}{X}$. Then in $\perv{p}{X}$ there are short exact sequences
\begin{equation*} 
0\to\Plsh\sh{E}\to\sh{E}\to\imath_*\pfun{p}{\imath^*}\sh{E}\to0 \quad \hbox{and} \quad 
0\to\imath_*\pfun{p}{\imath^!}\sh{E}\to\sh{E}\to\Pls\sh{E}\to0.
\end{equation*}
\end{lemma}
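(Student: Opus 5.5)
We treat the first sequence; the second is its dual, with the roles of $\pfun{p}{\jmath_!}$ and $\pfun{p}{\jmath_*}$, and of $\pfun{p}{\imath^*}$ and $\pfun{p}{\imath^!}$, exchanged. The argument could also be transported by Verdier duality, but the direct dual argument is cleaner. The strategy is to identify $\Plsh\sh{E}$, the image of the counit $\pfun{p}{\jmath_!}\jmath^*\sh{E}\to\sh{E}$, with the kernel of the projection $\sh{E}\to\imath_*\pfun{p}{\imath^*}\sh{E}$. By \cref{max_sub_quot_on_Z}, this projection is the maximal quotient of $\sh{E}$ supported on $S$, and it is epic.

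First we show that the composite $\pfun{p}{\jmath_!}\jmath^*\sh{E}\to\sh{E}\to\imath_*\pfun{p}{\imath^*}\sh{E}$ vanishes. Adjunction identifies it with a morphism $\jmath^*\sh{E}\to\jmath^*\imath_*\pfun{p}{\imath^*}\sh{E}=0$. Hence $\Plsh\sh{E}\subseteq K:=\ker(\sh{E}\to\imath_*\pfun{p}{\imath^*}\sh{E})$.

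Next we show equality by considering the cokernel $Q:=\sh{E}/\Plsh\sh{E}$. Apply the exact functor $\jmath^*$ to $\pfun{p}{\jmath_!}\jmath^*\sh{E}\to\sh{E}$. Since $\jmath^*\pfun{p}{\jmath_!}\cong\id$, this gives the identity of $\jmath^*\sh{E}$ up to the canonical isomorphism. So $\jmath^*\Plsh\sh{E}\to\jmath^*\sh{E}$ is an isomorphism, and therefore $\jmath^*Q=0$.

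We then want $Q$ to be supported on $S$, meaning $Q\cong\imath_*\pfun{p}{\imath^*}Q$. This follows from the triangle $\jmath_!\jmath^*Q\to Q\to\imath_*\imath^*Q$: since $\jmath^*Q=0$, we get $Q\cong\imath_*\imath^*Q$, and $\imath^*Q$ is perverse on $S$ because $\imath_*$ is t-exact and fully faithful. Thus $Q$ is a quotient of $\sh{E}$ supported on $S$. By the maximality in \cref{max_sub_quot_on_Z}, the map $\sh{E}\to Q$ factors through $\sh{E}\to\imath_*\pfun{p}{\imath^*}\sh{E}$. It follows that $K\subseteq\ker(\sh{E}\to Q)=\Plsh\sh{E}$, so $K=\Plsh\sh{E}$, and the first short exact sequence follows.

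The step needing the most care is the maximality argument. Concretely, we must check that a perverse sheaf annihilated by $\jmath^*$ lies in the essential image of $\imath_*$, and that the factorisation through the maximal quotient really gives the inclusion of kernels. Both follow from the recollement \cref{equation:6ffab}.
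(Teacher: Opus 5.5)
Your argument is correct, but it takes a different route from the paper.

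The paper takes the long exact sequence of perverse cohomology of the triangle $\jmath_!\jmath^*\sh{E}\to\sh{E}\to\imath_*\imath^*\sh{E}\to\jmath_!\jmath^*\sh{E}[1]$. Since $\jmath_!$ is right t-exact, $\pcoh{1}{\jmath_!\jmath^*\sh{E}}=0$, so $\pfun{p}{\jmath_!}\jmath^*\sh{E}\to\sh{E}\to\imath_*\pfun{p}{\imath^*}\sh{E}\to 0$ is exact. This gives at once that $\sh{E}\to\imath_*\pfun{p}{\imath^*}\sh{E}$ is epic and that its kernel is the image $\Plsh\sh{E}$.

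You instead work inside the abelian recollement:
\begin{enumerate}
\item adjunction gives $\Plsh\sh{E}\subseteq K$;
\item exactness of $\jmath^*$ together with $\jmath^*\pfun{p}{\jmath_!}\cong\id$ shows the cokernel $Q$ is killed by $\jmath^*$, hence lies in $\imath_*\perv{p}{S}$;
\item the universal property of the maximal quotient (equivalently, the adjunction between $\pfun{p}{\imath^*}$ and $\imath_*$) gives the reverse inclusion.
\end{enumerate}

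The price is that you rely on \cref{max_sub_quot_on_Z}, which the paper only quotes as well known, for the fact that $\sh{E}\to\imath_*\pfun{p}{\imath^*}\sh{E}$ is an epimorphism. The paper's perverse-cohomology computation proves that surjectivity directly.

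The gain is that your proof uses only the abelian recollement axioms. Your single appeal to the triangle can be replaced by the axiom that the kernel of $\jmath^*$ is the essential image of $\imath_*$. So your argument works verbatim for an arbitrary abelian recollement, which is the generality mentioned in the introduction.
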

\begin{proof}
The two short exact sequence are dual to each other, therefore we only prove the first statement. Let us consider the triangle $\jmath_!\jmath^*\sh{E}\to\sh{E}\to\imath_*\imath^*\sh{E}\to \jmath_!\jmath^*\sh{E}[1]$ in $\constr{c}{X}$. The claim follows by taking perverse cohomology of the above triangle, using that $\jmath_!$ is right t-exact and the definition of the projection functor given in \cref{def proj functors}.
\end{proof}

\cref{SESs proj functors} allows us to give an interpretation of $\Plsh\sh{E},\Pls\sh{E}\in\perv{p}{X}$ in terms of sub-objects and quotients supported on $S$.

\begin{remark}\label{rem:projection_in_kernel}
Let $\sh{E}\in\extf{S}{\sh{F}}$ for a fixed perverse sheaf $\sh{F}\in\perv{p}{X}$. By considering the two short exact sequences of \cref{SESs proj functors} and applying adjunctions \cref{equation:6ffab}, we have $\Plsh\sh{E}\in\ker\pfun{p}{\imath^*}$ and $\Pls\sh{E}\in\ker\pfun{p}{\imath^!}$. Thus, $\Plsh\sh{E}$ is the minimal sub-object of $\sh{E}$ such that the quotient is supported on $S$ and $\Pls\sh{E}$ is the minimal quotient of $\sh{E}$ such that the kernel is supported on $S$.
\end{remark}

We study how the projection functors interact with properties of morphisms in the category of extensions of $\sh{F}\in\perv{p}{U}$.

\begin{lemma}\label{lem:plshplsmonoepiiso}
Let $\sh{F}\in\perv{p}{U}$ be a fixed perverse sheaf and $\gamma\in\Mor{\extf{S}{\sh{F}}}{\sh{G}}{\sh{H}}$. Then $\Plsh\gamma$ is an epimorphism and $\Pls\gamma$ is a monomorphism. 
\end{lemma}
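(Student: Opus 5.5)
Since $\gamma$ is a morphism in $\extf{S}{\sh{F}}$, we have $\phi'\circ\jmath^*\gamma=\phi$, so $\jmath^*\gamma$ is an isomorphism. The plan is to compare $\Plsh\sh{G}$ and $\Plsh\sh{H}$ as quotients of the \emph{same} object $\pfun{p}{\jmath_!}\sh{F}$, and $\Pls\sh{G}$ and $\Pls\sh{H}$ as sub-objects of the \emph{same} object $\pfun{p}{\jmath_*}\sh{F}$. The two statements are exchanged by Verdier duality, which swaps $\pfun{p}{\jmath_!}$ and $\pfun{p}{\jmath_*}$ and hence $\Plsh$ and $\Pls$. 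So I will prove the claim for $\Plsh$ and deduce the other one by duality, or by the mirror argument.

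By naturality of the counit $\pfun{p}{\jmath_!}\jmath^*\to\id$, there is a commutative square
\begin{equation*}
\begin{tikzcd}
\pfun{p}{\jmath_!}\jmath^*\sh{G}\ar[r,"c_{\sh{G}}"]\ar[d,swap,"\pfun{p}{\jmath_!}\jmath^*\gamma"] & \sh{G}\ar[d,"\gamma"]\\
\pfun{p}{\jmath_!}\jmath^*\sh{H}\ar[r,"c_{\sh{H}}"] & \sh{H}
\end{tikzcd}
\end{equation*}
in which the left vertical arrow is an isomorphism. Factor $c_{\sh{G}}$ as the epimorphism $\pfun{p}{\jmath_!}\jmath^*\sh{G}\twoheadrightarrow\Plsh\sh{G}$ followed by the inclusion $\Plsh\sh{G}\hookrightarrow\sh{G}$, and factor $c_{\sh{H}}$ in the same way. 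By functoriality of images in an abelian category, $\Plsh\gamma$ is the unique map $\Plsh\sh{G}\to\Plsh\sh{H}$ compatible with both factorisations. In particular,
\[
\Plsh\gamma\circ\bigl(\pfun{p}{\jmath_!}\jmath^*\sh{G}\twoheadrightarrow\Plsh\sh{G}\bigr)=\bigl(\pfun{p}{\jmath_!}\jmath^*\sh{H}\twoheadrightarrow\Plsh\sh{H}\bigr)\circ\pfun{p}{\jmath_!}\jmath^*\gamma .
\]
The right-hand side is an epimorphism composed with an isomorphism, hence an epimorphism. It follows that $\Plsh\gamma$ is an epimorphism, because if $v\circ u$ is epic then $v$ is epic.

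Dually, $\Pls\gamma$ satisfies
\[
\bigl(\Pls\sh{H}\hookrightarrow\pfun{p}{\jmath_*}\jmath^*\sh{H}\bigr)\circ\Pls\gamma=\pfun{p}{\jmath_*}\jmath^*\gamma\circ\bigl(\Pls\sh{G}\hookrightarrow\pfun{p}{\jmath_*}\jmath^*\sh{G}\bigr).
\]
The right-hand side is a monomorphism, so $\Pls\gamma$ is a monomorphism.

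The only point that needs care is that $\jmath^*\gamma$ is an isomorphism. This holds precisely because $\gamma$ is a morphism in $\extf{S}{\sh{F}}$, and it is where the hypothesis of a fixed $\sh{F}$ enters, as in the remark following \cref{intermediate restr preserves}. Everything else is formal image-factorisation bookkeeping.
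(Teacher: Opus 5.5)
Your proof is correct and is essentially the paper's argument: the paper likewise writes $\Plsh\gamma\circ\pi_{\sh{G}}=\pi_{\sh{H}}\circ\pfun{p}{\jmath_!}\jmath^*\gamma$, concludes that $\Plsh\gamma$ is epic, and gets the $\Pls$ statement by duality. Your version is slightly more careful than the paper's, since you state explicitly that $\pfun{p}{\jmath_!}\jmath^*\gamma$ is an isomorphism (the paper treats it as an identity), and you do not implicitly assume $\gamma$ is monic.
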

\begin{proof}
The two statements are dual to each other, therefore we only prove only the first one. Let us consider the diagram 
\[
\begin{tikzcd} [row sep={0.6cm}]
\sh{G} \ar[rrr,hook,"\gamma"] &&& \sh{H} \\
& \Plsh\sh{G} \ar[r,"\Plsh\gamma"] \ar[ul,hook,"i_{\sh{G}}"] & \Plsh\sh{H} \ar[ur,hook,"i_{\sh{H}}"]& \\ 
\pfun{p}{\jmath_!}\sh{G} \ar[rrr,swap,"\jmath^*\gamma=\id"] \ar[uu] \ar[ur,two heads, "{\pi_{\sh{G}}}"]&&& \pfun{p}{\jmath_!}\sh{G} \ar[uu] \ar[ul,two heads, "{\pi_{\sh{H}}}"]
\end{tikzcd}
\]
We have that $\Plsh\gamma  \pi_{\sh{G}}=\pi_{\sh{H}} \jmath^*\gamma= \pi_{\sh{H}}$
is an epimorphism, then so is $\Plsh\gamma$. 
\end{proof}

\begin{remark}\label{proj_fun_isos}
In the setting of \cref{lem:plshplsmonoepiiso}, if $\gamma$ is assumed to be a monomorphism, then $\Plsh\gamma$ is an isomorphism. This follows since under this further assumption $\gamma i_{\sh{G}}=i_{\sh{H}}\Plsh\gamma$ is a monomorphism and so is $\Plsh\gamma$. Similarly, if $\gamma$ is an epimorphism, then $\Pls\gamma$ is an isomorphism.

We also note that the converse of the above result does not hold in general. For instance, consider the projection $\pi:\sh{G}\oplus\sh{H}\to\sh{G}$ in $\perv{p}{X}$, where $\sh{H}\in\perv{p}{S}$ is a non-zero object. Then $\Plsh\pi$ is an isomorphism although $\pi$ is not monic. 
\end{remark}

Let $\sh{E},\sh{G}\in\extf{S}{\sh{F}}$ be extensions of a fixed perverse sheaf $\sh{F}\in\perv{p}{X}$ and $\gamma\in\Mor{\extf{S}{\sh{F}}}{\sh{E}}{\sh{G}}$. By \cref{lem:plshplsmonoepiiso} we have 
\begin{equation*}\Plsh\sh{E}\twoheadrightarrow \Plsh\im\gamma \cong \Plsh\sh{G} \quad \hbox{and} \quad 
\Pls\sh{E}\cong\Pls\im\gamma\hookrightarrow \Pls\sh{G}.
\end{equation*}

\begin{figure}[H]
\centering
\begin{adjustbox}{scale=0.75, center}
\begin{tikzpicture}
\draw[thick,->>,>=stealth](0,3.2)--(3,0);
\draw[thick,right hook->,>=stealth](3.07,0.07)--(6,3.2);
\draw [fill] (0,3.2) circle [radius=.05];
\draw [fill] (3,0) circle [radius=.05];
\draw [fill] (0,3.2) circle [radius=.05];
\draw [fill] (6,3.2) circle [radius=.05];
\draw [fill,red] (0.55,2.6) circle [radius=.03];
\draw [fill,red] (1.5,1.6) circle [radius=.03];
\draw [fill,red] (5.4,2.55) circle [radius=.03];
\draw [fill,red] (4.35,1.45) circle [radius=.03];
\draw [fill,red] (2,4) circle [radius=.03];
\draw [fill,red] (4,4) circle [radius=.03];
\draw [rounded corners] (0,3.2) .. controls (1,6.3) .. (1.4,5.7);
\draw[rounded corners] (1.4,5.7) .. controls (1.8,5.2) .. (2.4 ,6.3);
\draw[rounded corners] (2.4,6.3) .. controls (3,7.5) .. (3.6,6.3);
\draw [rounded corners] (6,3.2) .. controls (5,6.3) .. (4.6,5.7);
\draw[rounded corners] (4.6,5.7) .. controls (4.2,5.2) .. (3.6,6.3);
\node at (3,-0.4){$\pfun{p}{\jmath_{!*}}\sh{F}$}; 
\node at (-0.4,3.2){$\pfun{p}{\jmath_{!}}\sh{F}$};
\node at (6.5,3.2){$\pfun{p}{\jmath_{*}}\sh{F}$};
\node at (2,4.2){$\sh{E}$};
\node at (4,4.2){$\sh{G}$};
\node at (3,4.1){$\color{red}{\scriptstyle{\gamma}}$};
\draw [fill,red] (2.95,3) circle [radius=.03];
\node at (2.95,2.85){$\color{red}{\scriptstyle{\im\gamma}}$};
\path[->,>=stealth,red](2.05,4) edge (3.95,4);
\path[dotted,red](0.55,2.6) edge (2,4);
\path[dotted,red] (1.5,1.6) edge (4,4);
\path[dotted,red] (2,4) edge (4.35,1.45);
\path[dotted,red] (4,4) edge (5.4,2.55); 
\path[thick,->>,>=stealth,red](0.55,2.6) edge (1.5,1.6);
\path[->>,>=stealth,red](2,4) edge (2.95,3);
\path[thick,right hook->,>=stealth,red] (4.35,1.45) edge (5.4,2.55);
\path[right hook->,>=stealth,red] (2.952,3.02) edge (4,4);
\node at (0.1,2.5) {$\color{red}{\Plsh\sh{E}}$};
\node at (1.2,1.4) {$\color{red}{\Plsh\sh{G}}$};
\node at (4.7,1.3) {$\color{red}{\Pls\sh{E}}$};
\node at (5.8,2.45) {$\color{red}{\Pls\sh{G}}$};
\node at (0.8,2) {$\color{red}{\scriptstyle{\Plsh\gamma}}$};
\node at (5.2,2) {$\color{red}{\scriptstyle{\Pls\gamma}}$};
\end{tikzpicture}
\end{adjustbox}
\caption{Behaviour of the projection functors on a morphism $\gamma\in\Mor{\extf{S}{\sh{F}}}{\sh{E}}{\sh{G}}$.} \label{fig:ME2}
\end{figure}

\subsection{Small extensions}\label{subsec_small_exts}
We will be particularly interested in the class of objects $\sh{E}\in\perv{p}{X}$ which are sent to zero by the intermediate restriction functor introduced in \cref{restriction_functors}. 

\begin{definition}
An object $\sh{E}\in\perv{p}{X}$ is \defn{small with respect to $S$} (or \defn{$S$-small}) if $\pfun{p}{\imath^{!*}}\sh{E}=0$.
\end{definition}

We will denote the \defn{category of $S$-small extensions over $S$} by $\sextf{S}{\sh{F}}$. 

It is easy to note that the extensions introduced in \cref{extension_functors} and \cref{Beilinson_max_ext} are examples of $S$-small objects.

\begin{remark} Let $\sh{F}\in\perv{p}{U}$. The objects $\pfun{p}{\jmath_!}\sh{F},\pfun{p}{\jmath_*}\sh{F}\in\perv{p}{X}$ are $S$-small in $\extf{S}{\sh{F}}$ by adjunction. The intermediate extension $\pfun{p}{\jmath_{!*}}\sh{F}$ is also $S$-small in $\extf{S}{\sh{F}}$ as it has no non-zero sub-object or quotient supported on $S$, see \cref{extension_functors}. Beilinson's maximal extension $\pfun{m}{\Xi}\sh{F}$ of $\sh{F}\in\perv{m}{X}$ is $S$-small in $\extf{S}{\sh{F}}$ by \cite[pag.112]{MR2671769} when it exists - see \cref{Beilinson_max_ext}. 
\end{remark}

From now on, we will assume that the category $\perv{p}{S}=\loc{S}[-p(S)]$ is semisimple. This will allow us to give a topological interpretation of $S$-small perverse sheaves in terms of summands supported on the closed stratum $S\subset X$.

\begin{remark}
By Maschke's Theorem, the assumption on the semisimplicity of $\loc{S}$ holds if $\pi_1(S)$ is finite and one works with a field whose characteristic does not divide the order of $\pi_1(S)$.
\end{remark}

\begin{lemma}\label{small_no_summ_S}
A perverse sheaf $\sh{E}\in\perv{p}{X}$ is $S$-small if and only if it has no summand supported on $S$. 
\end{lemma}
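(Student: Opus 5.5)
We prove the two implications separately, using \cref{max_sub_quot_on_Z} to read $\imath_*\pfun{p}{\imath^!}\sh{E}\monic\sh{E}$ as the maximal sub-object of $\sh{E}$ supported on $S$ and $\sh{E}\twoheadrightarrow\imath_*\pfun{p}{\imath^*}\sh{E}$ as the maximal quotient supported on $S$. Write $\sigma\colon\imath_*\pfun{p}{\imath^!}\sh{E}\to\imath_*\pfun{p}{\imath^*}\sh{E}$ for the composite of these two maps. Since $\imath_*$ is fully faithful and exact, $\sigma=\imath_*$ of the natural map $\pfun{p}{\imath^!}\sh{E}\to\pfun{p}{\imath^*}\sh{E}$. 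Hence $\sh{E}$ is $S$-small if and only if $\sigma=0$.

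\emph{Summand on $S$ implies not small.} Suppose $\sh{E}\cong\sh{E}'\oplus\imath_*\sh{L}$ with $\sh{L}\in\perv{p}{S}$ non-zero. The inclusion $\imath_*\sh{L}\monic\sh{E}$ factors through the maximal sub-object supported on $S$. Dually, the projection $\sh{E}\twoheadrightarrow\imath_*\sh{L}$ factors through the maximal quotient supported on $S$. The composite $\imath_*\sh{L}\to\sh{E}\to\imath_*\sh{L}$ is the identity, and it factors through $\sigma$. Therefore $\sigma\neq0$ and $\pfun{p}{\imath^{!*}}\sh{E}\neq0$. This direction does not need semisimplicity.

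\emph{Not small implies a summand on $S$.} Suppose $\sh{I}:=\pfun{p}{\imath^{!*}}\sh{E}\neq0$. We have the factorisation $\pfun{p}{\imath^!}\sh{E}\twoheadrightarrow\sh{I}\monic\pfun{p}{\imath^*}\sh{E}$ in $\perv{p}{S}$. Because $\perv{p}{S}=\loc{S}[-p(S)]$ is semisimple, both maps split. So there is $s\colon\sh{I}\to\pfun{p}{\imath^!}\sh{E}$ with $\pi s=\id$, and $r\colon\pfun{p}{\imath^*}\sh{E}\to\sh{I}$ with $r\iota=\id$, where $\pi$ is the surjection and $\iota$ the inclusion. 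Define
\[
a\colon\imath_*\sh{I}\xrightarrow{\imath_*s}\imath_*\pfun{p}{\imath^!}\sh{E}\monic\sh{E},\qquad
b\colon\sh{E}\twoheadrightarrow\imath_*\pfun{p}{\imath^*}\sh{E}\xrightarrow{\imath_*r}\imath_*\sh{I}.
\]
Then $ba=\imath_*(r\circ(\pfun{p}{\imath^!}\sh{E}\to\pfun{p}{\imath^*}\sh{E})\circ s)=\imath_*(r\iota\pi s)=\id_{\imath_*\sh{I}}$. So $a$ is a split monomorphism, and $\sh{E}\cong\imath_*\sh{I}\oplus\ker b$, which exhibits a non-zero summand supported on $S$.

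The main point to check carefully is the identification of the composite of the counit $\imath_*\pfun{p}{\imath^!}\sh{E}\to\sh{E}$ and the unit $\sh{E}\to\imath_*\pfun{p}{\imath^*}\sh{E}$ with $\imath_*$ of the natural transformation $\pfun{p}{\imath^!}\to\pfun{p}{\imath^*}$ used to define $\pfun{p}{\imath^{!*}}$. This holds by the recollement formalism, since $\imath_*$ is fully faithful, so the natural map is defined by exactly this composite. Once that is granted, the argument is formal; semisimplicity enters only to split the epi–mono factorisation through $\sh{I}$.
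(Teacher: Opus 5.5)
Your proof is correct and follows the same route as the paper. One direction holds because the composite $\imath_*\pfun{p}{\imath^!}\sh{E}\to\sh{E}\to\imath_*\pfun{p}{\imath^*}\sh{E}$ vanishes, and you make explicit why: the identity of a summand $\imath_*\sh{L}$ would factor through it. The other direction uses semisimplicity of $\perv{p}{S}$ to split off $\imath_*\pfun{p}{\imath^{!*}}\sh{E}$, and here you are slightly more complete than the paper. The paper only splits the epimorphism $\pfun{p}{\imath^!}\sh{E}\twoheadrightarrow\pfun{p}{\imath^{!*}}\sh{E}$ and leaves implicit the retraction $\sh{E}\to\imath_*\pfun{p}{\imath^{!*}}\sh{E}$, which you obtain by also splitting the monomorphism into $\pfun{p}{\imath^*}\sh{E}$.
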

\begin{proof}
Let us suppose $\sh{E}\in\perv{p}{X}$ $S$-small, that is $\pfun{p}{\imath^{!*}}\sh{E}\cong0$. By \cref{max_sub_quot_on_Z} $\imath_*\pfun{p}{\imath^!}\sh{E}$ and $\imath_*\pfun{p}{\imath^*}\sh{E}$ are the maximal quotient and maximal sub-object supported on $S$ respectively. By considering the diagram
\[
\begin{tikzcd}
		&\sh{E}\ar[dr,two heads] &\\
	\imath_*\pfun{p}{\imath^!}\sh{E}\ar[ur,hook] \ar[rr,"0"]&& \imath_*\pfun{p}{\imath^*}\sh{E}
\end{tikzcd}
\]
it follows that $\sh{E}$ cannot have summands in $\perv{p}{S}$. 

Now suppose that $\sh{E}\in\perv{p}{X}$ has no summand supported on $S$. Since $\perv{p}{S}$ is semisimple, the morphism $\pfun{p}{\imath^!}\sh{E} \twoheadrightarrow \pfun{p}{\imath^{!*}}\sh{E}$ splits. Thus there is a monomorphism $\imath_*\pfun{p}{\imath^{!*}}\sh{E}\monic\imath_*\pfun{p}{\imath^!}\sh{E}$ and we have $\sh{E}\cong\imath_*\pfun{p}{\imath^{!*}}\sh{E}\oplus\sh{E}'$. The $S$-smallness of $\sh{E}\in\perv{p}{X}$ follows. 
\end{proof}

We now show that the $S$-smallness property is inherited by sub-objects and quotients in the category $\extf{S}{\sh{F}}$.

\begin{proposition}\label{prop:closed_under_quot_and_subs}
Sub-objects and quotients of $S$-small objects in $\extf{S}{\sh{F}}$ are $S$-small.
\end{proposition}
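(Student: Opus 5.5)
The plan is to deduce the statement directly from \cref{intermediate restr preserves}. Throughout, a sub-object of $\sh{E}\in\extf{S}{\sh{F}}$ means a morphism $\gamma\colon\sh{G}\to\sh{E}$ in $\extf{S}{\sh{F}}$ that is a monomorphism in $\perv{p}{X}$. A quotient means such a morphism $\sh{E}\to\sh{G}$ that is an epimorphism in $\perv{p}{X}$. These are the notions of mono and epi used in the proof of \cref{preserving mono/epic}, and hence in \cref{intermediate restr preserves}.

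\emph{Sub-objects.} Let $\sh{E}$ be $S$-small, so $\pfun{p}{\imath^{!*}}\sh{E}=0$, and let $\gamma\colon\sh{G}\hookrightarrow\sh{E}$ be a sub-object in $\extf{S}{\sh{F}}$. By \cref{intermediate restr preserves}, the morphism $\pfun{p}{\imath^{!*}}\gamma\colon\pfun{p}{\imath^{!*}}\sh{G}\to\pfun{p}{\imath^{!*}}\sh{E}=0$ is a monomorphism in $\perv{p}{S}$. A monomorphism into the zero object forces its source to be zero, so $\pfun{p}{\imath^{!*}}\sh{G}=0$ and $\sh{G}$ is $S$-small.

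\emph{Quotients.} Dually, let $\gamma\colon\sh{E}\twoheadrightarrow\sh{G}$ be a quotient in $\extf{S}{\sh{F}}$. By \cref{intermediate restr preserves}, $\pfun{p}{\imath^{!*}}\gamma\colon 0=\pfun{p}{\imath^{!*}}\sh{E}\to\pfun{p}{\imath^{!*}}\sh{G}$ is an epimorphism, which forces $\pfun{p}{\imath^{!*}}\sh{G}=0$. This argument does not need the semisimplicity of $\loc{S}$.

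\emph{Main obstacle.} The argument itself is routine; the only point needing care is that the morphisms really live in $\extf{S}{\sh{F}}$, that is, that $\jmath^*\gamma$ is compatible with the identifications with $\sh{F}$. This compatibility is exactly what \cref{preserving mono/epic} uses, through $\pfun{p}{\jmath_*}\jmath^*\sh{G}\cong\pfun{p}{\jmath_*}\jmath^*\sh{H}$ in its diagram chase. For a sub-object that is merely some monomorphism in $\perv{p}{X}$ this could fail, since $\jmath^*$ of a mono need not be an isomorphism. I would therefore state explicitly that the sub-objects and quotients are taken inside $\extf{S}{\sh{F}}$. Alternatively, one could use \cref{small_no_summ_S} as a cross-check: a summand of $\sh{G}$ supported on $S$ would produce a nonzero morphism out of, or into, a sub-object or quotient of $\sh{E}$ supported on $S$ that factors through $\pfun{p}{\imath^{!*}}\sh{E}$. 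The route via \cref{intermediate restr preserves} is the one I would follow.
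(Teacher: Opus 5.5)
Your proof is correct and is essentially the paper's argument. The paper applies \cref{preserving mono/epic} directly: it shows that $\pfun{p}{\imath^!}\sh{E}'\to\pfun{p}{\imath^*}\sh{E}'$ vanishes because its composite with the monomorphism $\pfun{p}{\imath^*}\sh{E}'\hookrightarrow\pfun{p}{\imath^*}\sh{E}$ factors through $\pfun{p}{\imath^{!*}}\sh{E}=0$. You package the same step through \cref{intermediate restr preserves}, and you rightly insist that sub-objects and quotients be taken in $\extf{S}{\sh{F}}$, which is exactly what that lemma needs.
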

\begin{proof}
The two statements are dual, hence we only prove the first one. Let $\sh{E}\in\perv{p}{X}$ be an $S$-small extension of a fixed $\sh{F}\in\perv{p}{U}$ and consider a sub-object $\sh{E}'\in\perv{p}{X}$ of $\sh{E}$ in $\extf{S}{\sh{F}}$, that is there is a monomorphism $i_{\sh{E}'}:\sh{E}'\monic\sh{E}$ in $\perv{p}{X}$ such that $\jmath^* i_{\sh{E}'}=\id_{\sh{F}}$. We have a diagram
\begin{equation*}\begin{tikzcd} [row sep={0.6cm}]
\pfun{p}{\imath^!}\sh{E}'\ar[d,hook] \ar[rr] && \pfun{p}{\imath^*}\sh{E}' \ar[d,hook]\\
\pfun{p}{\imath^!}\sh{E}\ar[rr] \ar[dr,two heads] && \pfun{p}{\imath^*}\sh{E} \\
&\pfun{p}{\imath^{!*}}\sh{E}\cong0\ar[ur,hook]
\end{tikzcd}\end{equation*}
where the vertical arrows are monomorphisms by Lemma \ref{preserving mono/epic}. Therefore, the top horizonatal map in the above diagram is zero. Hence $\pfun{p}{\imath^{!*}}\sh{E}'=0$, that is $\sh{E}'\in\perv{p}{X}$ is $S$-small.
\end{proof}

\begin{remark}\label{rem:SESF_not_abelian}
The category $\sextf{S}{\sh{F}}$ is additive, and closed under quotients and sub-objects by \cref{prop:closed_under_quot_and_subs}. In general, it fails to be a Serre subcategory of $\perv{p}{X}$, since the intermediate restriction functor is not exact. Moreover, $\sextf{S}{\sh{F}}$ is not abelian. For instance, in the setting of \cref{ex:P1} the morphism $\gamma\colon\jmath_!\kk_{\C}[1]\to\jmath_*\kk_{\C}[1]$ is such that $\ker\gamma\cong\coker\gamma\cong\imath_*\kk_{\pt}\not\in\sextf{S}{\sh{F}}$ by \cref{small_no_summ_S}.
\end{remark}

\subsection{Extending a perverse sheaf over a closed stratum}\label{subsection:extending_pervsheaves}

Let us fix $\sh{F}\in\perv{p}{U}$. Given an object $\sh{A}\in\extf{S}{\sh{F}}$ with $\pfun{p}{\imath^*}\sh{A}\cong0$, there is a  short exact sequences $0\to\imath_*\pfun{p}{\imath^!}\sh{A}\to\sh{A}\to\pfun{p}{\jmath_{!*}}\sh{F}\to0$ in $\perv{p}{X}$, that is an element
\[
\epsilon_{\sh{A}}\in\Ext{\perv{p}{X}}{1}{\pfun{p}{\jmath_{!*}}\sh{F}}{\imath_*\pfun{p}{\imath^!}\sh{A}} .
\]
Similarly, given a perverse sheaf $\sh{B}\in\perv{p}{X}$ with $\pfun{p}{\imath^!}\sh{B}\cong0$ there is a short exact sequences $0\to\pfun{p}{\jmath_{!*}}\sh{F}\to\sh{B}\to\imath_*\pfun{p}{\imath^*}\sh{B}\to0$ in $\perv{p}{X}$, that is an element
\[
\epsilon_{\sh{B}}\in  \Ext{\perv{p}{X}}{1}{\imath_*\pfun{p}{\imath^*}\sh{B}}{\pfun{p}{\jmath_{!*}}\sh{F}}.
\]

\begin{definition}\label{ext_pair_relative}
The pair $(\sh{A},\sh{B})\in\perv{p}{X}^2$ is an \defn{extension pair relative to} $\sh{F}\in\perv{p}{U}$ provided that:
\begin{enumerate}
\item $\sh{A},\sh{B}\in\extf{S}{\sh{F}}$.
\item $\pfun{p}{\imath^*}\sh{A}\cong0\cong\pfun{p}{\imath^!}\sh{B}$.
\item The class $\epsilon_{\sh{A}}\epsilon_{\sh{B}}=0$ in $\Mor{\constr{c}{S}}{\pfun{p}{\imath^*}\sh{B}[-1]}{\pfun{p}{\imath^!}\sh{A}[1]}$.
\end{enumerate}
\end{definition}

\begin{remark}\leavevmode
\begin{enumerate}
\item The data of an extension pair $(\sh{A},\sh{B})$ is equivalent to a pair of objects which restrict to $\sh{F}$ on $U$ with no quotient supported and no sub-object supported on $S$ respectively, and two classes $\epsilon_{\sh{A}}$ and $\epsilon_{\sh{B}}$ such that the triangulated class $\epsilon_{\sh{A}}\epsilon_{\sh{B}}$ vanishes.
\item If one assumes that $S$ is a closed stratum of $X$ such that $H^2(S;\sh{L})\cong0$ for any $\sh{L}\in\loc{S}$, then the third condition in \cref{ext_pair_relative} is always satisfied and an extension pair relative to $\sh{F}$ is given just by a pair of perverse sheaves on $X$ which restrict to $\sh{F}$ such that the first one has no quotient supported on $S$ and the second has no sub-object supported on $S$. We remark that although the condition $H^2(S;\sh{L})\cong0$ for any $\sh{L}\in\loc{S}$ simplifies the situation, it will not be needed in the results we obtain. 
For instance, in the (much more restrictive) cases when the category $\perv{p}{X}$ is highest weight, see \cite[Theorem 5.17]{ParshallScott}, or even faithful highest weight, see \cref{perverse_def_funct} and \cite[Theorem 3.4]{CW26}, $X$ is a topologically stratified space with finitely many contractible strata, each with finite fundamental group. Hence, the condition on the vanishing of the second cohomology group of the closed stratum automatically holds.
\end{enumerate}
\end{remark}

Extension pairs relative to a fixed perverse sheaf $\sh{F}\in\perv{p}{X}$ and $S$-small extensions of $\sh{F}$ turn out to be closely related.

\begin{theorem}\label{thm:thmonetoonecorrsmallext}
Let $X$ be a topologically stratified space, $S\subset X$ a closed stratum so that $\loc{S}$ is semisimple and $\sh{F}\in\perv{p}{U}$. 

There is a bijection between (isomorphism classes of) extension pairs $(\sh{A},\sh{B})$ relative to a fixed $\sh{F}\in\perv{p}{X\smallsetminus S}$ and (isomorphism classes of) $S$-small extension $\sh{E}(\sh{A},\sh{B})\in\extf{S}{\sh{F}}$.\end{theorem}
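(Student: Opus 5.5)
The plan is to construct maps in both directions and check that they are mutually inverse on isomorphism classes.

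\emph{From small extensions to pairs.} Given an $S$-small $\sh{E}\in\extf{S}{\sh{F}}$, I will send it to $(\Plsh\sh{E},\Pls\sh{E})$. Conditions (1) and (2) of \cref{ext_pair_relative} follow from \cref{rem:projection_in_kernel}: $\jmath^*$ is exact and kills the $S$-supported terms in \cref{SESs proj functors}, so both objects lie in $\extf{S}{\sh{F}}$.

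For condition (3), I will use $S$-smallness. Since $\pfun{p}{\imath^{!*}}\sh{E}=0$, the composite $\imath_*\pfun{p}{\imath^!}\sh{E}\monic\sh{E}\twoheadrightarrow\imath_*\pfun{p}{\imath^*}\sh{E}$ vanishes. Hence $\imath_*\pfun{p}{\imath^!}\sh{E}\subset\Plsh\sh{E}$, and one gets a three-step filtration
\[
\imath_*\pfun{p}{\imath^!}\sh{E}\subset\Plsh\sh{E}\subset\sh{E},
\]
whose graded pieces are $\imath_*\pfun{p}{\imath^!}\sh{E}$, $\pfun{p}{\jmath_{!*}}\sh{F}$ and $\imath_*\pfun{p}{\imath^*}\sh{E}$. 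I will check that the middle piece is indeed $\pfun{p}{\jmath_{!*}}\sh{F}$, via the factorisation \cref{can morphism proj functors}, and that $\pfun{p}{\imath^!}\Plsh\sh{E}=\pfun{p}{\imath^!}\sh{E}$ and $\pfun{p}{\imath^*}\Pls\sh{E}=\pfun{p}{\imath^*}\sh{E}$.

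A three-step filtration of this kind means the Yoneda product of the two adjacent $\mathrm{Ext}^1$-classes is zero. Concretely, $\epsilon_{\sh{A}}\epsilon_{\sh{B}}$ is the obstruction to realising such a filtered object, and the octahedral axiom applied to $\Plsh\sh{E}\to\sh{E}\to\imath_*\pfun{p}{\imath^*}\sh{E}$ exhibits the required null-homotopy. Pushing $\epsilon_{\sh{A}}\epsilon_{\sh{B}}$ to $\constr{c}{S}$ by adjunction then gives condition (3).

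\emph{From pairs to small extensions.} Given an extension pair $(\sh{A},\sh{B})$, the vanishing of $\epsilon_{\sh{A}}\epsilon_{\sh{B}}$ means that the composite
\[
\imath_*\pfun{p}{\imath^*}\sh{B}[-1]\to\pfun{p}{\jmath_{!*}}\sh{F}\to\imath_*\pfun{p}{\imath^!}\sh{A}[1]
\]
is zero. The first map therefore lifts along $\sh{A}\to\pfun{p}{\jmath_{!*}}\sh{F}$ to a morphism $\imath_*\pfun{p}{\imath^*}\sh{B}[-1]\to\sh{A}$. Its cone $\sh{E}$ is perverse, being an extension of perverse objects, and the octahedral axiom produces short exact sequences $0\to\sh{A}\to\sh{E}\to\imath_*\pfun{p}{\imath^*}\sh{B}\to0$ and $0\to\imath_*\pfun{p}{\imath^!}\sh{A}\to\sh{E}\to\sh{B}\to0$. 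From these I will read off $\jmath^*\sh{E}\cong\sh{F}$, $\Plsh\sh{E}\cong\sh{A}$ and $\Pls\sh{E}\cong\sh{B}$.

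For smallness, I will argue that $\pfun{p}{\imath^!}\sh{E}=\pfun{p}{\imath^!}\sh{A}$ and that the composite to $\pfun{p}{\imath^*}\sh{E}$ factors through $\sh{A}\to\imath_*\pfun{p}{\imath^*}\sh{A}=0$. This shows $\sh{E}\mapsto(\Plsh\sh{E},\Pls\sh{E})$ is surjective onto isomorphism classes of pairs.

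\emph{Injectivity.} This is the main obstacle. The lift above is only unique modulo the image of $\Mor{}{\imath_*\pfun{p}{\imath^*}\sh{B}[-1]}{\imath_*\pfun{p}{\imath^!}\sh{A}}$, which lands in negative $\mathrm{Ext}$-groups between perverse objects and is zero. The resulting $\sh{E}$ is therefore determined by $\epsilon_{\sh{A}}$ and $\epsilon_{\sh{B}}$. One must still show it does not depend on the choice of isomorphisms identifying the pair.

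Here I would use semisimplicity of $\loc{S}$. Any $S$-small $\sh{E}$ with $(\Plsh\sh{E},\Pls\sh{E})\cong(\sh{A},\sh{B})$ is an extension of $\imath_*\pfun{p}{\imath^*}\sh{B}$ by $\sh{A}$ whose class maps to $\epsilon_{\sh{B}}$ under
\[
\Ext{}{1}{\imath_*\pfun{p}{\imath^*}\sh{B}}{\sh{A}}\to\Ext{}{1}{\imath_*\pfun{p}{\imath^*}\sh{B}}{\pfun{p}{\jmath_{!*}}\sh{F}}.
\]
The kernel of this map is the image of $\Ext{}{1}{\imath_*\pfun{p}{\imath^*}\sh{B}}{\imath_*\pfun{p}{\imath^!}\sh{A}}$, which equals $\mathrm{Ext}^1$ in $\loc{S}$ and vanishes by semisimplicity. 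So the class of $\sh{E}$, and hence $\sh{E}$, is uniquely determined, which gives injectivity.
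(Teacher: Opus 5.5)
Your proposal is correct and follows essentially the paper's route. You lift $\imath_*\pfun{p}{\imath^*}\sh{B}[-1]\to\pfun{p}{\jmath_{!*}}\sh{F}$ along $\sh{A}\to\pfun{p}{\jmath_{!*}}\sh{F}$, take the cone, and use the octahedral axiom to get the two short exact sequences. Conversely, you use the filtration $\imath_*\pfun{p}{\imath^!}\sh{E}\subset\Plsh\sh{E}\subset\sh{E}$ to see that the Yoneda product vanishes.

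One correction. The group $\Mor{}{\imath_*\pfun{p}{\imath^*}\sh{B}[-1]}{\imath_*\pfun{p}{\imath^!}\sh{A}}\cong\Ext{}{1}{\imath_*\pfun{p}{\imath^*}\sh{B}}{\imath_*\pfun{p}{\imath^!}\sh{A}}$ is a degree-one group, not a negative $\mathrm{Ext}$. So it does not vanish for t-structure reasons: for constant coefficients on a circle it is $H^1(S^1;\kk)\neq 0$. It vanishes only because $\loc{S}$ is semisimple, and this is exactly where the paper uses the hypothesis, to make the lift $\imath_*\pfun{p}{\imath^*}\sh{B}[-1]\to\sh{A}$ unique.

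This is the same group you kill in your injectivity paragraph, so your argument still goes through. The ``main obstacle'' you isolate there is just this uniqueness of the lift, rephrased as uniqueness of the class in $\Ext{}{1}{\imath_*\pfun{p}{\imath^*}\sh{B}}{\sh{A}}$ lying over $\epsilon_{\sh{B}}$. Phrasing it that way does make explicit the check $\sh{E}\cong\sh{E}(\Plsh\sh{E},\Pls\sh{E})$, which the paper leaves implicit.
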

\begin{proof}
Suppose that $(\sh{A},\sh{B})$ is an extension pair relative to a fixed $\sh{F}\in\perv{p}{U}$. Then, there are triangles
\begin{equation}\label{triang ext pairs}
\imath_*\pfun{p}{\imath^!}\sh{A}\to\sh{A}\to\pfun{p}{\jmath_{!*}}\sh{F}\overset{\epsilon_{\sh{A}}}{\to}\imath_*\pfun{p}{\imath^!}\sh{A}[1] \quad \hbox{and} \quad
\imath_*\pfun{p}{\imath^*}\sh{B}[-1]\overset{\epsilon_{\sh{B}}}{\to}\pfun{p}{\jmath_{!*}}\sh{F}\to\sh{B}\to\imath_*\pfun{p}{\imath^*}\sh{B}
\end{equation}
in $\constr{c}{X}$, where $\epsilon_{\sh{A}}\epsilon_{\sh{B}}$ is the zero class. Since the category $\perv{p}{S}$ is semisimple, then we have that  $\Mor{\constr{c}{X}}{\imath_*\pfun{p}{\imath^*}\sh{B}[-1]}{\imath_*\pfun{p}{\imath^!}\sh{A}}\cong0$ and applying the functor $\Mor{\constr{c}{X}}{\imath_*\pfun{p}{\imath^*}\sh{B}[-1]}{-}$ to the first triangle in \cref{triang ext pairs} gives an exact sequence
\begin{equation*}
0\to\Mor{}{\imath_*\pfun{p}{\imath^*}\sh{B}[-1]}{\sh{A}}\to\Mor{}{\imath_*\pfun{p}{\imath^*}\sh{B}[-1]}{\pfun{p}{\jmath_{!*}}\sh{F}}\overset{\cdot\epsilon_{\sh{B}}}{\to}\Mor{}{\imath_*\pfun{p}{\imath^*}\sh{B}[-1]}{\imath_*\pfun{p}{\imath^!}\sh{A}[1]}\to\ldots 
\end{equation*}
Since $\epsilon_{\sh{A}}\epsilon_{\sh{B}}=0$, there is a unique factorisation of $\epsilon_{\sh{A}}$ via $\widetilde{\epsilon_{\sh{A}}}\in\Mor{}{\imath_*\pfun{p}{\imath^*}\sh{B}[-1]}{\sh{A}}$. We define $\sh{E}(\sh{A},\sh{B})=\mbox{cone}(\widetilde{\epsilon_{\sh{A}}})$. By construction $\sh{E}(\sh{A},\sh{B})\in\perv{p}{X}$ is an extension of $\sh{F}$ which, up to isomorphism, only depends on the isomorphism class of $(\sh{A},\sh{B})$. From the short exact sequence 
\begin{equation}\label{eqn:SES_used_octa}
0\to\sh{A}\to\sh{E}(\sh{A},\sh{B})\to\imath_*\pfun{p}{\imath^*}\sh{B}\to0
\end{equation}
in $\perv{p}{X}$, we deduce that $\pfun{p}{\imath^*}\sh{E}(\sh{A},\sh{B})\cong\pfun{p}{\imath^*}\sh{B}$ and hence $\sh{A}\cong\Plsh\sh{E}(\sh{A},\sh{B})$. Applying the octahedral axiom applied to the factorisation of $\epsilon_{\sh{A}}$ gives a diagram 
\begin{equation}\label{octa}\begin{tikzcd} 
\imath_*\pfun{p}{\imath^*}\sh{B}[-1]\ar[rr,"\widetilde{\epsilon_{\sh{A}}}"] \ar[rrrr,bend left=20, "\epsilon_{\sh{A}}"]&& \sh{A} \ar[dl]\ar[rr]&& \pfun{p}{\jmath_{!*}}\sh{F} \ar[dl]\ar[ddll,bend left]\\
& \sh{E}(\sh{A},\sh{B})\ar[dr]\ar[ul,dashed]&& \imath_*\pfun{p}{\imath^!}\sh{A}[1] \ar[ul,dashed]\ar[ll,dashed]&\\
&& \sh{B}\ar[ur]\ar[uull,dashed,bend left] &&
\end{tikzcd}\end{equation}
where the dashed arrows denote the shift by $[1]$, the triangles with odd numbers of dashed arrows are exact and those with even number of dashed arrows are commutative. Thus, we have a short exact sequence
\begin{equation*}
0\to\imath_*\pfun{p}{\imath^!}\sh{A}\to\sh{E}(\sh{A},\sh{B})\to\sh{B}\to0
\end{equation*}
in $\perv{p}{X}$ which implies that $\pfun{p}{\imath^!}\sh{E}(\sh{A},\sh{B})\cong\pfun{p}{\imath^!}\sh{A}$ and hence $\sh{B}\cong\Pls\sh{E}(\sh{A},\sh{B})$. That is, starting with $(\sh{A},\sh{B})$ an extension pair relative to a fixed $\sh{F}\in\perv{p}{U}$ we have a diagram
\begin{equation}\begin{tikzcd}[column sep={0.6cm}, row sep={0.4cm}] \label{comm diag exts} 
& \imath_*\pfun{p}{\imath^!}\sh{A}\ar[rr] \ar[dr] && \imath_*\pfun{p}{\imath^*}\sh{B}\ar[dr,equal] & \\
\imath_*\pfun{p}{\imath^!}\sh{A} \ar[dr] \ar[ur,equal]&& \sh{E}(\sh{A},\sh{B})\ar[dr] \ar[ur]&& \imath_*\pfun{p}{\imath^*}\sh{B}\\
&\sh{A}\ar[ur] \arrow{dr}{\alpha}  && \sh{B}\ar[dr]  \ar[ur]& \\
\imath_*\pfun{p}{\imath^*}\sh{B}[-1]\ar[dr,equal]\ar[ur]&& \pfun{p}{\jmath_{!*}}\sh{F} \arrow{dr}{\epsilon_{\sh{A}}}\arrow{ur}{\beta}  && \imath_*\pfun{p}{\imath^!}\sh{A}[1]\\
&\imath_*\pfun{p}{\imath^*}\sh{B}[-1] \arrow{ur}{\epsilon_{\sh{B}}}\ar[rr,"0"]&& \imath_*\pfun{p}{\imath^!}\sh{A}[1]\ar[ur,equal]&
\end{tikzcd}\end{equation}

By applying the functor $\Mor{}{\imath_*\pfun{p}{\imath^!}\sh{A}}{-}$ to \cref{eqn:SES_used_octa} and using that $\pfun{p}{\imath^!}\sh{E}(\sh{A},\sh{B})\cong\pfun{p}{\imath^!}\sh{A}$, we have that 
\[
0\cong \Mor{}{\pfun{p}{\imath^!}\sh{A}}{\pfun{p}{\imath^*}\sh{B}}\cong \Mor{}{\pfun{p}{\imath^!}\sh{E}(\sh{A},\sh{B})}{\pfun{p}{\imath^*}\sh{E}(\sh{A},\sh{B})},
\]
that is $\sh{E}(\sh{A},\sh{B})$ is an $S$-small extension of $\sh{F}$. 

Now suppose that $\sh{E}\in\perv{p}{X}$ be an $S$-small extension of a fixed perverse sheaf $\sh{F}\in\perv{p}{U}$. By \cref{SESs proj functors}, there is a commutative diagram 
\begin{equation}\begin{tikzcd}[column sep={0.6cm}, row sep={0.4cm}] \label{comm diag exts} 
& \imath_*\pfun{p}{\imath^!}\sh{E}\ar[rr,swap,"0"] \ar[dr] && \imath_*\pfun{p}{\imath^*}\sh{E}\ar[dr,"\cong"] & \\
\imath_*\pfun{p}{\imath^!}\Plsh\sh{E} \ar[dr] \ar[ur,"\cong"]&& \sh{E}\ar[dr] \ar[ur]&& \imath_*\pfun{p}{\imath^*}\Pls\sh{E}\\
&\Plsh\sh{E}\ar[ur] \arrow{dr}{\alpha}  && \Pls\sh{E}\ar[dr]  \ar[ur]& \\
\imath_*\pfun{p}{\imath^*}\sh{E}[-1]\ar[dr,"\cong"]\ar[ur]&& \pfun{p}{\jmath_{!*}}\sh{F} \arrow{dr}{\epsilon_{\sh{A}}}\arrow{ur}{\beta}  && \imath_*\pfun{p}{\imath^!}\sh{E}[1]\\
&\imath_*\pfun{p}{\imath^*}\Pls\sh{E}[-1] \arrow{ur}{\epsilon_{\sh{B}}}\ar[rr]&& \imath_*\pfun{p}{\imath^!}\Plsh\sh{E}[1]\ar[ur,"\cong"]&
\end{tikzcd}\end{equation}
where the diagonals are exact triangles in $\constr{c}{X}$. The $S$-smallness of $\sh{E}$ implies that the top horizontal morphism in \cref{comm diag exts} is zero, and that $\imath_*\pfun{p}{\imath^!}\Plsh\sh{E}\cong\imath_*\pfun{p}{\imath^!}\sh{E}$ and $\imath_*\pfun{p}{\imath^*}\Pls\sh{E}\cong\imath_*\pfun{p}{\imath^*}\sh{E}$. By construction, we have the classes $\epsilon_{\sh{A}}\in\Ext{}{1}{\pfun{p}{\jmath_{!*}}\sh{F}}{\imath_*\pfun{p}{\imath^!}\Plsh\sh{E}}$ and $\epsilon_{\sh{B}}\in\Ext{}{1}{\imath_*\pfun{p}{\imath^*}\Pls\sh{E}}{\pfun{p}{\jmath_{!*}}\sh{F}}$, and applying the exact functor $\jmath^*$ to the induced short exact sequence we get $\Plsh\sh{E},\Pls\sh{E}\in\extf{S}{\sh{F}}$. Moreover, by \cref{rem:projection_in_kernel}, we have $\pfun{p}{\imath^*}\Plsh\sh{E}\cong\pfun{p}{\imath^!}\Pls\sh{E}\cong0$. Finally, the $S$-small object $\sh{E}$ admits a filtration $\sh{E}\supset \Plsh\sh{E}\supset \imath_*\pfun{p}{\imath^!}\Plsh\sh{E}\supset0$ such that
\[
\sh{E}/\Plsh\sh{E}\cong\imath_*\pfun{p}{\imath^*}\sh{E}\cong\imath_*\pfun{p}{\imath^*}\Pls\sh{E} \quad \hbox{and} \quad \Plsh/\imath_*\pfun{p}{\imath^!}\Plsh\sh{E}\cong\pfun{p}{\jmath_{!*}}\sh{F} 
\]
with induced classes $\Ext{}{1}{\pfun{p}{\jmath_{!*}}\sh{F}}{\imath_*\pfun{p}{\imath^!}\Plsh\sh{E}}$ and $\Ext{}{1}{\imath_*\pfun{p}{\imath^*}\sh{E}}{\Plsh\sh{E}/\imath_*\pfun{p}{\imath^!}\Plsh\sh{E}}$ isomorphic to $\epsilon_{\sh{A}}$ and $\epsilon_{\sh{B}}$ respectively. Thus the class $\epsilon_{\sh{A}}\epsilon_{\sh{B}}\in\Ext{\perv{p}{X}}{2}{\imath_*\pfun{p}{\imath^*}\Pls\sh{E}}{\imath_*\pfun{p}{\imath^!}\Plsh\sh{E}} \cong \Ext{\perv{p}{X}}{2}{\imath_*\pfun{p}{\imath^*}\sh{E}}{\imath_*\pfun{p}{\imath^!}\sh{E}}$ is the zero class. This implies that $\epsilon_{\sh{A}}\epsilon_{\sh{B}}\in\Ext{\constr{c}{X}}{2}{\imath_*\pfun{p}{\imath^*}\sh{E}}{\imath_*\pfun{p}{\imath^!}\sh{E}}$ is the zero class as by definition it is obtained as a product of two (abelian) $\mathrm{Ext}^1$-classes. Thus the bottom horizontal morphism in \cref{comm diag exts} is zero and therefore $(\Plsh\sh{E},\Pls\sh{E})$ is an extension pair relative to $\sh{F}\in\perv{p}{U}$.
\end{proof}

We now introduce a generalised version of Beilinson's maximal extension, see \cref{Beilinson_max_ext} under the only assumption that $S\subset X$ is a closed stratum such that the category $\loc{S}$ is semisimple.
\begin{definition}\label{defn:generalised_max_ext}
	The \emph{maximal extension of a fixed $\sh{F}\in\perv{p}{U}$} is the (isomorphism class of the) perverse sheaf $\pfun{p}{\Xi}\sh{F}\in\perv{p}{X}$ corresponding to the extension pair $(\pfun{p}{\jmath_!}\sh{F},\pfun{p}{\jmath_*}\sh{F})$ relative to a fixed $\sh{F}\in\perv{p}{U}$. 
\end{definition}
 
\begin{remark}\label{rem_on_def__max_ext}\leavevmode
\begin{enumerate}
	\item The maximal extension introduced in \cref{defn:generalised_max_ext} is well-defined by \cref{thm:thmonetoonecorrsmallext}. Since $\constr{c}{X}$ is an algebraic triangulated category, see \cref{constructible_der_cat}, we get a well-defined functor $\pfun{p}{\Xi}\colon\perv{p}{X}\to\perv{p}{X}$. In the setting of \cref{Beilinson_max_ext} it agrees with Beilinson's maximal extension.
	\item In \cite[Proposition 6.1]{MR833195}, the authors prove that when $X$ is either a topologically stratified space or a complex analytic manifolds such that $\pi_1(S)=\pi_2(S)=0$, any perverse sheaf on $X\smallsetminus S$ has a maximal indecomposable extension which is unique (up to non-canonical isomorphism). Since the only hypotheses we use is that $\loc{S}$ is semisimple, \cref{thm:thmonetoonecorrsmallext} extends these results to a wider class of stratified spaces with no restriction on the perversity function.
    \item \cref{thm:thmonetoonecorrsmallext} gives `coordinates' on $S$-small extensions $\sh{E}\in\perv{p}{X}$ of a fixed perverse sheaf $\sh{F}\in\perv{p}{U}$. Indeed, it shows that for any extension pair relative to a perverse sheaf $\sh{F}\in\perv{p}{U}$ there is a unique $S$-small extension $\sh{E}\in\perv{p}{X}$ of $\sh{F}$. This means that in order to understand all $S$-small extensions $\sh{E}$ of $\sh{F}$, it is enough to study objects of the form $\Plsh\sh{G}$ and $\Pls\sh{G}$ for some $\sh{G}\in\perv{p}{X}$, see \cref{fig:M1}.
\begin{figure}[H]
\centering
\begin{adjustbox}{scale=0.85, center}
\begin{tikzpicture}
\path[fill=gray!20] (3,0) -- (0,3.2) -- (3,6.4) -- (6,3.2) -- cycle;
\draw[thick,->>,>=stealth](0,3.2)--(2.95,0.05);
\draw[thick,right hook->,>=stealth](3.05,0.05)--(5.95,3.15);
\draw[dashed,->>,>=stealth](0,3.2)--(2.95,6.35);
\draw[dashed,right hook->,>=stealth](3.05,6.35)--(5.95,3.25);
\draw[dotted,red](1.1,2)--(3,4);
\draw[dotted,red](3,4)--(4.9,2);
\draw [fill] (0,3.2) circle [radius=.05];
\draw [fill] (3,0) circle [radius=.05];
\draw [fill] (0,3.2) circle [radius=.05];
\draw [fill] (6,3.2) circle [radius=.05];
\draw [fill] (3,6.4) circle [radius=.05];
\draw [fill,red] (1.11,2.02) circle [radius=.03];
\draw [fill,red] (4.89,2.02) circle [radius=.03];
\draw [fill,red] (3,4) circle [radius=.03];
\draw [rounded corners] (0,3.2) .. controls (1,6.3) .. (1.4,5.7);
\draw[rounded corners] (1.4,5.7) .. controls (1.8,5.2) .. (2.4 ,6.3);
\draw[rounded corners] (2.4,6.3) .. controls (3,7.5) .. (3.6,6.3);
\draw [rounded corners] (6,3.2) .. controls (5,6.3) .. (4.6,5.7);
\draw[rounded corners] (4.6,5.7) .. controls (4.2,5.2) .. (3.6,6.3);
\node at (3,6.6){${\pfun{p}{\Xi}\sh{F}}$};
\node at (3,-0.4){${\substack{\mathlarger{\pfun{p}{\jmath_{!*}}{\sh{F}}} \\ \scriptstyle{\mbox{(simple if so is $\sh{F}$)}}}}$};
\node at (-1.9,3.2){$\mbox{(initial)} \ \pfun{p}{\jmath_!}\sh{F}$};
\node at (7.8,3.2){$\pfun{p}{\jmath_*}\sh{F} \ \mbox{(final)}$};
\node at (3,4.2){$\sh{E}$};
\node at (0.8,1.8){$\Plsh\sh{E}$};
\node at (5.2,1.8){$\Pls\sh{E}$};  
\end{tikzpicture}
\end{adjustbox}
\medskip
\caption{Extensions $\sh{E}\in\perv{p}{X}$ of a fixed perverse sheaf $\sh{F}\in\perv{p}{U}$. The shaded part indicates the $S$-small extensions of $\sh{F}\in\perv{p}{U}$. Objects which lie on the lower left (respectively right) edge have no quotient (respectively sub-object) supported on $S$.} \label{fig:M1}
\end{figure}
\end{enumerate}
\end{remark}

\subsection{The category of extension pairs over a closed stratum}\label{cat_of_ext pairs} Let us consider the {\em category $\ep{X\smallsetminus S}$ of extension pairs over $S$}  whose:
\begin{itemize}
\item objects are extension pairs $(\sh{A},\sh{B})\in\perv{p}{X}^2$ relative to any $\sh{F}\in\perv{p}{X\smallsetminus S}$.
\item morphisms $f=(f_{\sh{A}},f_{\sh{B}})\colon (\sh{A},\sh{B})\to (\sh{A}',\sh{B}')$ such that $\alpha=\alpha' f_{\sh{A}}$ and $\beta'=f_{\sh{B}}\beta$, that is in the diagram
\begin{equation*}\begin{tikzcd}
\sh{A} \ar[r,"\alpha"] \ar[d, swap, "{f_{\sh{A}}}"]& \pfun{p}{\jmath_{!*}}\sh{F}\ar[d,"\id"] \ar[r,"\beta"] & \sh{B} \ar[d, "{f_{\sh{B}}}"] \\
\sh{A}' \ar[r,"\alpha'"] & \pfun{p}{\jmath_{!*}}\sh{F} \ar[r,"\beta'"] & \sh{B}' 
\end{tikzcd}\end{equation*}
both squares commute.
\end{itemize}

\begin{remark}
The category $\ep{X\smallsetminus S}$ is additive, but in general is not abelian. For instance, in the setting of \cref{ex:P1}, the morphism $f=(f_1,\id)\colon(\jmath_!\kk_{\C}[1], \kk_{\PP^1}[1])\to(\jmath_*\kk_{\C}[1],\kk_{\PP^1}[1])$ admits neither a kernel nor a cokernel. 
\end{remark}

\begin{theorem}\label{thm:equivalence_of_cat}
Let $X$ be a topologically stratified space, $S\subset X$ be a closed stratum such that $\loc{S}$ is semisimple and $\sh{F}\in\perv{p}{U}$.

The assignment 
\begin{equation*}\begin{split}
\Theta \colon    \sextf{S}{\sh{F}} &\to \ep{X\smallsetminus S}\\ 
  \sh{E} & \mapsto (\Plsh\sh{E},\Pls\sh{E})
\end{split}\end{equation*}
defines an equivalence of additive categories.
\end{theorem}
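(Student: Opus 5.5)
We plan to show that $\Theta$ is a well-defined additive functor which is essentially surjective and fully faithful. Well-definedness on objects is the second half of the proof of \cref{thm:thmonetoonecorrsmallext}: for $\sh{E}\in\sextf{S}{\sh{F}}$ the pair $(\Plsh\sh{E},\Pls\sh{E})$ is an extension pair. On morphisms we set $\Theta(\gamma)=(\Plsh\gamma,\Pls\gamma)$ for $\gamma\colon\sh{E}\to\sh{E}'$ in $\sextf{S}{\sh{F}}$. To see that this is a morphism of extension pairs, note that $\alpha\colon\Plsh\sh{E}\to\pfun{p}{\jmath_{!*}}\sh{F}$ is the quotient by the maximal sub-object supported on $S$. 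Both $\alpha$ and $\alpha'\Plsh\gamma$ are determined by their restriction to $U$, and both restrict to $\id_{\sh{F}}$. Since $\pfun{p}{\jmath_{!*}}\sh{F}$ has no sub-object supported on $S$, two maps from $\Plsh\sh{E}$ that agree after $\jmath^*$ have difference factoring through a quotient supported on $S$, and hence vanish. Therefore $\alpha=\alpha'\Plsh\gamma$. Dually, $\beta'=\Pls\gamma\,\beta$. Functoriality and additivity follow because $\Plsh,\Pls$ are additive functors. Essential surjectivity follows from \cref{thm:thmonetoonecorrsmallext}, since $(\sh{A},\sh{B})\cong(\Plsh\sh{E}(\sh{A},\sh{B}),\Pls\sh{E}(\sh{A},\sh{B}))$.

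For full faithfulness, we fix $\sh{E},\sh{E}'$ and use the filtrations from the proof of \cref{thm:thmonetoonecorrsmallext}: $\sh{E}$ sits in $0\to\Plsh\sh{E}\to\sh{E}\to\imath_*\pfun{p}{\imath^*}\sh{E}\to0$ and $0\to\imath_*\pfun{p}{\imath^!}\sh{E}\to\sh{E}\to\Pls\sh{E}\to0$, and $S$-smallness makes the composite $\imath_*\pfun{p}{\imath^!}\sh{E}\to\sh{E}\to\imath_*\pfun{p}{\imath^*}\sh{E}$ vanish.

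\emph{Faithfulness.} Suppose $\Plsh\gamma=0=\Pls\gamma$; since $\jmath^*\gamma$ is forced to be the identity in $\extf{S}{\sh{F}}$, we should read this as a statement about differences. Concretely, let $\delta=\gamma_1-\gamma_2$ with $\Plsh\delta=0$ and $\Pls\delta=0$. Then $\delta$ factors through $\imath_*\pfun{p}{\imath^*}\sh{E}\to\sh{E}'$, and it composes to zero into $\Pls\sh{E}'$. Hence $\delta$ is a map $\imath_*\pfun{p}{\imath^*}\sh{E}\to\imath_*\pfun{p}{\imath^!}\sh{E}'$. Precomposed with $\sh{E}\twoheadrightarrow\imath_*\pfun{p}{\imath^*}\sh{E}$, this map must vanish when restricted to $\imath_*\pfun{p}{\imath^!}\sh{E}$ and when pushed to $\imath_*\pfun{p}{\imath^*}\sh{E}'$; I do not expect these two conditions alone to kill it. What does kill it is semisimplicity: such a map gives a summand of $\sh{E}'$ (respectively of $\sh{E}$) supported on $S$. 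Indeed, a nonzero component $\sh{L}\to\imath_*\pfun{p}{\imath^!}\sh{E}'$ with $\sh{L}$ simple is a split mono in $\perv{p}{S}$. Composing $\sh{E}\to\sh{L}\to\sh{E}'$ and using that $\delta$ plus the identity-type maps still agree on $U$, one produces a retraction, which contradicts \cref{small_no_summ_S}. I would double-check this step carefully.

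\emph{Fullness.} This is the main obstacle. Given $(f_{\sh{A}},f_{\sh{B}})$, we need $\gamma\colon\sh{E}\to\sh{E}'$ with $\Plsh\gamma=f_{\sh{A}}$ and $\Pls\gamma=f_{\sh{B}}$. Here we use that $\constr{c}{X}$ is algebraic, as noted in \cref{constructible_der_cat}, so cones are functorial. By \cref{thm:thmonetoonecorrsmallext} we may take $\sh{E}=\mathrm{cone}(\widetilde{\epsilon_{\sh{A}}})$ and $\sh{E}'=\mathrm{cone}(\widetilde{\epsilon_{\sh{A}'}})$, where $\widetilde{\epsilon_{\sh{A}}}\colon\imath_*\pfun{p}{\imath^*}\sh{B}[-1]\to\sh{A}$. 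The map $f_{\sh{B}}$ induces $g=\imath_*\pfun{p}{\imath^*}f_{\sh{B}}$. The key claim is that the square
\[
\widetilde{\epsilon_{\sh{A}'}}\circ g[-1]=f_{\sh{A}}\circ\widetilde{\epsilon_{\sh{A}}}
\]
commutes. Both sides are maps $\imath_*\pfun{p}{\imath^*}\sh{B}[-1]\to\sh{A}'$, and the proof of \cref{thm:thmonetoonecorrsmallext} shows that such maps are determined by their composite with $\alpha'$. These composites are $\epsilon_{\sh{B}'}g[-1]$ and $\alpha'f_{\sh{A}}\widetilde{\epsilon_{\sh{A}}}=\alpha\widetilde{\epsilon_{\sh{A}}}=\epsilon_{\sh{B}}$. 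They agree because $\beta'=f_{\sh{B}}\beta$ gives a morphism of the $\sh{B}$-triangles, so $\epsilon_{\sh{B}'}\circ g[-1]=\epsilon_{\sh{B}}$.

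A functorial cone then yields $\gamma$ with $\gamma|_{\sh{A}}=f_{\sh{A}}$, so $\Plsh\gamma=f_{\sh{A}}$. By naturality of the octahedron \cref{octa}, the induced map on $\Pls$ agrees with $f_{\sh{B}}$ modulo the ambiguity already controlled in the faithfulness step. If necessary, we correct $\gamma$ by a map through $S$ to achieve $\Pls\gamma=f_{\sh{B}}$ exactly.
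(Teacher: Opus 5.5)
The genuine gap is faithfulness, and it cannot be filled: as stated, the theorem is false.

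Your well-definedness, essential surjectivity and fullness arguments are essentially the paper's, and they are fine. The lift of $\epsilon_{\sh{B}}$ through $\alpha'$ is unique because $\Mor{}{\imath_*\pfun{p}{\imath^*}\sh{B}[-1]}{\imath_*\pfun{p}{\imath^!}\sh{A}'}=0$, and a functorial cone then supplies $\gamma$. Your closing ``correction'' is unnecessary. Since $\Mor{}{\sh{A}}{\imath_*\pfun{p}{\imath^!}\sh{A}'}=0=\Mor{}{\imath_*\pfun{p}{\imath^*}\sh{B}}{\sh{B}'}$, the maps $f_{\sh{A}}$ and $f_{\sh{B}}$ are forced by $\alpha=\alpha'f_{\sh{A}}$ and $\beta'=f_{\sh{B}}\beta$. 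Hence $\Pls\gamma=f_{\sh{B}}$ automatically, and every Hom-set of $\ep{X\smallsetminus S}$ has at most one element.

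You rightly read faithfulness as a statement about differences. It then says: every $\delta\colon\sh{E}\to\sh{E}'$ with $\jmath^*\delta=0$ vanishes. Such $\delta$ automatically satisfy $\Plsh\delta=0=\Pls\delta$, since $\Plsh\delta$ is induced by $\pfun{p}{\jmath_!}\jmath^*\delta=0$. But these $\delta$ are exactly the composites $\sh{E}\twoheadrightarrow\imath_*\pfun{p}{\imath^*}\sh{E}\xrightarrow{h}\imath_*\pfun{p}{\imath^!}\sh{E}'\hookrightarrow\sh{E}'$ with $h$ arbitrary. So they form a group isomorphic to $\Mor{\perv{p}{S}}{\pfun{p}{\imath^*}\sh{E}}{\pfun{p}{\imath^!}\sh{E}'}$, which semisimplicity does nothing to kill.

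Your retraction step fails for two reasons. A split mono $\sh{L}\to\imath_*\pfun{p}{\imath^!}\sh{E}'$ in $\perv{p}{S}$ is no longer split once composed with the non-split inclusion into $\sh{E}'$. And for $\sh{E}=\sh{E}'$, the would-be retraction $\sh{L}\hookrightarrow\sh{E}\to\sh{L}$ factors through $\imath_*\pfun{p}{\imath^!}\sh{E}\to\sh{E}\to\imath_*\pfun{p}{\imath^*}\sh{E}$, which is zero precisely because $\sh{E}$ is $S$-small. So $S$-smallness makes $\delta$ square-zero; it does not produce summands.

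Concretely, in \cref{ex:P1} take $\sh{E}=\pfun{m}{\Xi}\sh{F}$. Then $\pfun{m}{\imath^*}\sh{E}\cong\pfun{m}{\imath^*}\pfun{m}{\jmath_*}\sh{F}\cong\kk_{\pt}$ and $\pfun{m}{\imath^!}\sh{E}\cong\pfun{m}{\imath^!}\pfun{m}{\jmath_!}\sh{F}\cong\kk_{\pt}$. The composite $\delta\colon\sh{E}\twoheadrightarrow\imath_*\kk_{\pt}\hookrightarrow\sh{E}$ is non-zero. Hence $\id\neq\id+\delta$ are two endomorphisms of $\sh{E}$ in $\sextf{S}{\sh{F}}$ with $\Theta(\id)=\Theta(\id+\delta)=(\id,\id)$. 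Indeed $\mathrm{End}(\sh{E})\cong\kk[c]/(c^2)$, while the pair has only its identity. So $\Theta$ is not faithful.

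The paper's own faithfulness argument has the same hole. It starts from $\Theta(\epsilon)=0$ for a morphism of $\sextf{S}{\sh{F}}$, which is impossible when $\sh{F}\neq0$. Read as a statement about differences, $\im\delta$ is supported on $S$ and is not an object of $\extf{S}{\sh{F}}$, so \cref{prop:closed_under_quot_and_subs} and \cref{lem:plshplsmonoepiiso} do not apply to it.

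What your argument does prove is that $\Theta$ is full and essentially surjective onto pairs relative to $\sh{F}$. It also reflects isomorphisms: for $S$-small $\sh{E}$ one has $\pfun{p}{\imath^*}\sh{E}\cong\pfun{p}{\imath^*}\Pls\sh{E}$, so if $\Plsh\gamma$ and $\Pls\gamma$ are invertible, the five lemma on $0\to\Plsh\sh{E}\to\sh{E}\to\imath_*\pfun{p}{\imath^*}\sh{E}\to0$ makes $\gamma$ invertible. This recovers the bijection of \cref{thm:thmonetoonecorrsmallext}. An actual equivalence is obtained only after identifying parallel morphisms in $\sextf{S}{\sh{F}}$.
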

\begin{proof}
Since $\constr{c}{X}$ is an algebraic triangulated category, see \cref{constructible_der_cat}, $\Theta$ is an additive essentially surjective functor by the proof of \cref{thm:thmonetoonecorrsmallext}.

Let $\epsilon\in\Mor{\sextf{S}{\sh{F}}}{\sh{E}}{\sh{E}'}$ be such that $\Theta(\epsilon)=\left(\Plsh\epsilon,\Pls\epsilon\right)=0$ and consider the factorisation
\begin{equation*}
\begin{tikzcd}
\sh{E} \ar[rr,"\epsilon"] \ar[dr,two heads] && \sh{E}' \\
&\im\epsilon \ar[ur,hook]
\end{tikzcd}.
\end{equation*}
By \cref{prop:closed_under_quot_and_subs}, we have that $\im\epsilon$ is $S$-small. By \cref{lem:plshplsmonoepiiso} the morphism $\Pls\sh{E}\cong\Pls\im\epsilon \to\Pls\sh{E}'$ is monic and is zero by assumption. Then $\Pls\im\epsilon\cong 0$ and \cref{SESs proj functors} implies that $\im\epsilon\cong\imath_*\pfun{p}{\imath^*}\im\epsilon$. By \cref{small_no_summ_S} the $S$-smallness of $\im\epsilon$ forces $\im\epsilon\cong 0$, thus $\epsilon\cong0$. Therefore, $\Theta$ is faithful.

Let $f=(f_{\sh{A}},f_{\sh{B}})\in\Mor{\ep{X\smallsetminus S}}{(\sh{A},\sh{B})}{(\sh{A}',\sh{B}')}$ and consider the diagram 
\begin{equation*}\label{big_diagr}
\begin{tikzcd}
\imath_*\pfun{p}{\imath^*}\sh{B}[-1] \arrow[ddr, swap, phantom, "(1)" below]  \ar[dr,dashed,"{\exists!\widetilde{\epsilon_{\sh{A}}}}"] \ar[ddd,swap,"{\imath_*\pfun{p}{\imath^*}f_{\sh{B}}}"]\ar[rr, "\epsilon_{\sh{A}}"] && \pfun{p}{\jmath_{!*}}\sh{F} \ar[ddd,"\id"] \ar[r,"\beta"] \arrow[dddr, phantom, "(3)"]& \sh{B} \ar[ddd, "f_{\sh{B}}"]\\
& \sh{A} \ar[d,"f_{\sh{A}}"] \ar[ur,"\alpha'"] \arrow[ddr, phantom,swap, "(2)" above]&& \\
& \sh{A}' \ar[dr,"\alpha"]&& \\
\imath_*\pfun{p}{\imath^*}\sh{B}'[-1] \ar[ur,dashed,"{\exists!\widetilde{\epsilon_{\sh{A}'}}}"]\ar[rr, "\epsilon_{\sh{A}'}"] && \pfun{p}{\jmath_{!*}}\sh{F} \ar[r,"\beta'"]& \sh{B}'
\end{tikzcd},
\end{equation*}
where the subdiagrams $(2)$ and $(3)$ commute by definition and $\beta\epsilon_{\sh{A}}=\beta'\epsilon_{\sh{A}'}=0$. The fact that $\exists!\widetilde{\epsilon_{\sh{A}}}\in\Mor{}{\imath_*\pfun{p}{\imath^*}\sh{B}[-1]}{\sh{A}}$ and $\exists!\widetilde{\epsilon_{\sh{A}'}}\in\Mor{}{\imath_*\pfun{p}{\imath^*}\sh{B}'[-1]}{\sh{A}'}$ is obtained by applying the functor $\Mor{}{\imath_*\pfun{p}{\imath^*}\sh{B}[-1]}{-}$, respectively $\Mor{}{\imath_*\pfun{p}{\imath^*}\sh{B}'[-1]}{-}$, to the second short exact sequence of \cref{SESs proj functors} for $\sh{A}$, respectively for $\sh{A}'$, and using that $\perv{p}{S}$ is semisimple. Similarly, one obtains that $\exists!\mu\colon\imath_*\pfun{p}{\imath^*}\sh{B}[-1]\to\sh{A}'$, hence the sub-diagram $(1)$ also commutes. The unique morphism $\mu\in\Mor{}{\imath_*\pfun{p}{\imath^*}\sh{B}[-1]}{\sh{A}'}$ lifts $\epsilon_{\sh{A}}=\epsilon_{\sh{A}'}\imath_*\pfun{p}{\imath^*}f_{\sh{B}}$ along $\alpha'$, where the latter identity follows since $(3)$, and hence $(1)+(2)$, commutes. The existence of a unique $\mu$ implies that in the diagram of triangles
\begin{equation*}
\begin{tikzcd}
\imath_*\pfun{p}{\imath^*}\sh{B}[-1] \ar[r,"{\widetilde{\epsilon_{\sh{A}}}}"] \ar[d,swap, "{\imath_*\pfun{p}{\imath^*}f_{\sh{B}}[-1]}"]  &\sh{A} \ar[r] \ar[d,swap, "f_{\sh{A}}"] & \sh{E}(\sh{A},\sh{B}) \ar[r] \ar[d,dashed]& \imath_*\pfun{p}{\imath^*}\sh{B} \ar[d,"\imath_*\pfun{p}{\imath^*}f_{\sh{B}}"] \\
\imath_*\pfun{p}{\imath^*}\sh{B}'[-1]  \ar[r,"{\widetilde{\epsilon_{\sh{A}'}}}"] &\sh{A}' \ar[r] & \sh{E}(\sh{A}',\sh{B}') \ar[r] & \imath_*\pfun{p}{\imath^*}\sh{B}' 
\end{tikzcd} 
\end{equation*}
the left hand side square commutes. We define $\epsilon\in\Mor{\sextf{S}{\sh{F}}}{\sh{E}(\sh{A},\sh{B})}{\sh{E}(\sh{A}',\sh{B}')}$ to be the dashed morphism in above diagram. Since the construction of $\sh{E}(\sh{A},\sh{B})$ in \cref{thm:thmonetoonecorrsmallext} is functorial, then $\epsilon\colon\sh{E}\to\sh{E}$ is unique (up to isomorphism). With the diagram \cref{comm diag exts} we verify that $\Theta(\epsilon)=(\Plsh\epsilon,\Pls\epsilon)\cong(f_{\sh{A}},f_{\sh{B}})$, that is $\Theta$ is full.

It follows that $\Theta$ is an equivalence of additive categories.
\end{proof}

Note that an object in $(\sh{A},\sh{B})\in\ep{X\smallsetminus S}$ is indecomposable if and only if the canonical morphism $\beta\alpha\colon\sh{A}\to\sh{B}$ cannot be written as a direct sum of other morphisms. As immediate consequence of \cref{thm:equivalence_of_cat} we obtain a characterisation of indecomposable $S$-small extensions over $S$. 

\begin{corollary}\label{cor_on_indecomp}
Let $X$ be a topologically stratified space, $S\subset X$ a closed stratum such that the category $\loc{S}$ is semisimple and $\sh{F}\in\perv{p}{U}$.

An $S$-small perverse sheaf $\sh{E}\in\sextf{S}{\sh{F}}$ is indecomposable if and only if the canonical morphism $\beta\alpha\colon\Plsh\sh{E}\to\Pls\sh{E}$ cannot be written as a direct sum of morphisms.
\end{corollary}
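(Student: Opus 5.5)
The plan is to deduce the corollary directly from \cref{thm:equivalence_of_cat}, together with the observation, stated just before the corollary, about indecomposable objects of $\ep{X\smallsetminus S}$.

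The first step is to transport indecomposability along $\Theta$. An equivalence of additive categories preserves and reflects direct sum decompositions: if $\sh{E}\cong\sh{E}_1\oplus\sh{E}_2$ with both summands non-zero, then $\Theta(\sh{E})\cong\Theta(\sh{E}_1)\oplus\Theta(\sh{E}_2)$, and both summands are non-zero because $\Theta$ is faithful. Conversely, suppose $\Theta(\sh{E})\cong P_1\oplus P_2$ with $P_1,P_2$ non-zero. Essential surjectivity gives $P_i\cong\Theta(\sh{E}_i)$ for some $\sh{E}_i\in\sextf{S}{\sh{F}}$, and full faithfulness gives $\sh{E}\cong\sh{E}_1\oplus\sh{E}_2$. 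Here I would note that $\sextf{S}{\sh{F}}$ is additive (\cref{rem:SESF_not_abelian}), so this direct sum is taken inside the category. Hence $\sh{E}$ is indecomposable in $\sextf{S}{\sh{F}}$ if and only if $(\Plsh\sh{E},\Pls\sh{E})$ is indecomposable in $\ep{X\smallsetminus S}$.

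The second step is to check that indecomposability of $\sh{E}$ in $\sextf{S}{\sh{F}}$ agrees with indecomposability in $\perv{p}{X}$. Summands of an $S$-small object are sub-objects and quotients, and they have no summand supported on $S$. So by \cref{small_no_summ_S} (or \cref{prop:closed_under_quot_and_subs}) any decomposition of $\sh{E}$ in $\perv{p}{X}$ consists of $S$-small objects. The one subtlety is the role of $\sh{F}$. A summand $\sh{E}_i$ restricts to a summand $\sh{F}_i$ of $\sh{F}$, not to $\sh{F}$ itself. For this reason I would read $\sextf{S}{\sh{F}}$, and likewise $\ep{X\smallsetminus S}$, as allowing the fixed sheaf to vary over summands of $\sh{F}$. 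This matches the definition of $\ep{X\smallsetminus S}$, whose objects are extension pairs relative to any $\sh{F}$, and the additivity claimed in \cref{thm:equivalence_of_cat}.

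The third step is to spell out the remark preceding the corollary. A decomposition $(\sh{A},\sh{B})\cong(\sh{A}_1,\sh{B}_1)\oplus(\sh{A}_2,\sh{B}_2)$ in $\ep{X\smallsetminus S}$ has components that are compatible with $\alpha$ and $\beta$. It therefore exhibits $\beta\alpha\colon\sh{A}\to\sh{B}$ as the direct sum $\beta_1\alpha_1\oplus\beta_2\alpha_2$. Conversely, suppose $\beta\alpha\cong\gamma_1\oplus\gamma_2$ with $\gamma_i\colon\sh{A}_i\to\sh{B}_i$. Taking images recovers $\pfun{p}{\jmath_{!*}}\sh{F}\cong\im\gamma_1\oplus\im\gamma_2$, because $\alpha$ is epic and $\beta$ is monic. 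Each $(\sh{A}_i,\sh{B}_i)$ is then an extension pair relative to the corresponding summand of $\sh{F}$. Conditions (1) and (2) of \cref{ext_pair_relative} pass to summands, since $\pfun{p}{\imath^*}$ and $\pfun{p}{\imath^!}$ are additive. Condition (3) holds because the class $\epsilon_{\sh{A}}\epsilon_{\sh{B}}$ splits as a direct sum of the classes of the summands, and a direct sum vanishes exactly when each component does.

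I expect this last verification to be the main obstacle, specifically making sure that the given splitting of $\beta\alpha$ is compatible with the factorisation through $\pfun{p}{\jmath_{!*}}\sh{F}$. My approach there would be to use that image factorisations are unique up to unique isomorphism, so any splitting of $\beta\alpha$ induces a splitting of the epi–mono factorisation.
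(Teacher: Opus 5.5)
Your route is the paper's: the corollary is read off from \cref{thm:equivalence_of_cat} together with the remark that a pair is indecomposable exactly when $\beta\alpha$ does not split. Your second and third steps correctly supply what the paper leaves implicit. Summands of $\sh{E}$ are $S$-small extensions of summands of $\sh{F}$. A splitting of $\beta\alpha$ splits its epi--mono factorisation, hence the pair, and condition (3) then holds componentwise.

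\textbf{The gap.} It is in your first step, where you use that $\Theta$ is faithful. It is not, in either reading of the morphisms. In \cref{ex:P1}, put $\sh{E}=\pfun{m}{\Xi}\sh{F}$. Let $N$ be the composite $\sh{E}\twoheadrightarrow\imath_*\pfun{m}{\imath^*}\sh{E}\cong\imath_*\kk_{\pt}\cong\imath_*\pfun{m}{\imath^!}\sh{E}\hookrightarrow\sh{E}$. Then $N\neq 0$, but $\Plsh N=0$, since $\Plsh\sh{E}$ dies in $\imath_*\pfun{m}{\imath^*}\sh{E}$. Likewise $\Pls N=0$, since $\imath_*\pfun{m}{\imath^!}\sh{E}$ dies in $\Pls\sh{E}$. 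As $\jmath^*N=0$, the maps $\id$ and $\id+N$ are distinct morphisms of $\sextf{S}{\sh{F}}$ with the same image $(\id,\id)$ under $\Theta$.

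The paper's faithfulness argument misses this because the identification $\Pls\sh{E}\cong\Pls\im\epsilon$ of \cref{proj_fun_isos} requires $\jmath^*\epsilon$ to be an isomorphism. So ``full faithfulness gives $\sh{E}\cong\sh{E}_1\oplus\sh{E}_2$'' is not a valid justification. Nor can you deduce $\Theta(\sh{E}_i)\neq 0$ from faithfulness.

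\textbf{The repair.} The corollary itself survives.
\begin{itemize}
\item Non-vanishing of the summands: if $\Theta(\sh{E}_i)=0$, then $\Pls\sh{E}_i=0$. So $\sh{E}_i$ is supported on $S$ by \cref{SESs proj functors}, and $\sh{E}_i=0$ by \cref{small_no_summ_S}.
\item The converse direction: argue on objects rather than on morphisms. Push the class of $0\to\Plsh\sh{E}\to\sh{E}\to\imath_*\pfun{p}{\imath^*}\sh{E}\to 0$ forward along $\alpha$. You get the class of $0\to\pfun{p}{\jmath_{!*}}\sh{F}\to\Pls\sh{E}\to\imath_*\pfun{p}{\imath^*}\sh{E}\to 0$, i.e.\ $\epsilon_{\sh{B}}$ for $\sh{B}=\Pls\sh{E}$. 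Hence that class is the unique lift $\widetilde{\epsilon_{\sh{A}}}$ from the proof of \cref{thm:thmonetoonecorrsmallext}, and $\sh{E}\cong\sh{E}(\Plsh\sh{E},\Pls\sh{E})$.
\item By the same uniqueness, the lift attached to $(\sh{A}_1\oplus\sh{A}_2,\sh{B}_1\oplus\sh{B}_2)$ is the direct sum of the lifts for the two pairs from your third step. Cones commute with direct sums, so $\sh{E}\cong\sh{E}(\sh{A}_1,\sh{B}_1)\oplus\sh{E}(\sh{A}_2,\sh{B}_2)$. Both summands are non-zero, because $\sh{A}_i\subset\sh{E}(\sh{A}_i,\sh{B}_i)\twoheadrightarrow\sh{B}_i$ and $(\sh{A}_i,\sh{B}_i)\neq 0$.
\end{itemize}

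Equivalently, there is a morphism-level explanation of why the failure of faithfulness is harmless. The kernel of $\Theta$ on $\mathrm{End}(\sh{E})$ consists of the maps factoring as $\sh{E}\twoheadrightarrow\imath_*\pfun{p}{\imath^*}\sh{E}\to\imath_*\pfun{p}{\imath^!}\sh{E}\hookrightarrow\sh{E}$. This ideal squares to zero precisely because $\sh{E}$ is $S$-small. $\Theta$ is full: the lifting argument of \cref{thm:equivalence_of_cat} goes through with $\id$ replaced by the induced map on $\pfun{p}{\jmath_{!*}}\sh{F}$. Hence idempotents of $\Theta(\sh{E})$ lift to idempotents of $\sh{E}$ modulo this nil ideal.
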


We now give a characterisation of indecomposable perverse sheaves on $X$ based on the equivalence of categories proved in \cref{thm:equivalence_of_cat} and \cref{cor_on_indecomp}.

\begin{theorem}\label{thm:on:indec_exts}
Let $X$ be a topologically stratified space, $S\subset X$ a closed stratum such that the category $\loc{S}$ is semisimple and $\sh{F}\in\perv{p}{U}$.

Isomorphism classes of indecomposable $p$-perverse sheaves on $X$ are classified by extensions by zero of indecomposable local systems on $S$ and extension pairs relative to $\sh{F}\in\perv{p}{U}$ whose canonical morphism cannot be written a direct sum.
\end{theorem}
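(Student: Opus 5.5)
The plan is a dichotomy for an indecomposable $\sh{E}\in\perv{p}{X}$: either $\sh{E}$ is $S$-small, or it is not. In the second case $\sh{E}$ will turn out to be supported on $S$. In the first case we reduce to \cref{cor_on_indecomp} applied to $\sh{F}:=\jmath^*\sh{E}$. Throughout, $\sh{F}$ is not fixed in advance. Each indecomposable $\sh{E}$ is attached to its own restriction $\jmath^*\sh{E}$, which is perverse because $\jmath^*$ is t-exact. This turns $\sh{E}$ into an object of $\extf{S}{\jmath^*\sh{E}}$, with $\phi=\id$.

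\emph{Non-small case.} Suppose $\sh{E}$ is indecomposable and not $S$-small. By \cref{small_no_summ_S}, $\sh{E}$ has a nonzero summand supported on $S$. Indecomposability then forces $\sh{E}\cong\imath_*\sh{M}$ for some $\sh{M}\in\perv{p}{S}=\loc{S}[-p(S)]$. Since $\imath_*$ is fully faithful, $\sh{M}$ is indecomposable, so $\sh{M}=\sh{L}[-p(S)]$ with $\sh{L}$ an indecomposable (in fact simple, by semisimplicity) local system. Conversely, every $\imath_*\sh{L}[-p(S)]$ with $\sh{L}$ indecomposable is an indecomposable perverse sheaf, again by full faithfulness of $\imath_*$. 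These objects satisfy $\jmath^*\sh{E}=0$, i.e.\ they are the extensions of $\sh{F}=0$. Two such objects are isomorphic if and only if the local systems are isomorphic, which gives the first family of the classification.

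\emph{Small case.} Suppose $\sh{E}$ is indecomposable and $S$-small. Then $\sh{E}\in\sextf{S}{\sh{F}}$ with $\sh{F}=\jmath^*\sh{E}$. Objects of $\sextf{S}{\sh{F}}$ have morphisms compatible with $\phi$, but the automorphisms and idempotents of $\sh{E}$ in $\perv{p}{X}$ are not a priori morphisms of $\sextf{S}{\sh{F}}$; this is where I expect the main obstacle. I would handle it by noting that a direct sum decomposition $\sh{E}=\sh{E}_1\oplus\sh{E}_2$ in $\perv{p}{X}$ is also a decomposition in $\sextf{S}{\sh{F}}$. Indeed, each summand $\sh{E}_k$ is $S$-small, since a summand supported on $S$ of $\sh{E}_k$ would be one of $\sh{E}$. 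Moreover the decomposition restricts to $\sh{F}\cong\jmath^*\sh{E}_1\oplus\jmath^*\sh{E}_2$, and the extension pairs then split accordingly. So indecomposability in $\perv{p}{X}$ and in $\sextf{S}{\sh{F}}$ coincide, provided one regards $\ep{X\smallsetminus S}$ as allowing $\sh{F}$ to vary, as in its definition. With this in hand, \cref{cor_on_indecomp} says $\sh{E}$ is indecomposable if and only if $\beta\alpha\colon\Plsh\sh{E}\to\Pls\sh{E}$ is not a direct sum of morphisms.

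\emph{Bijection on isomorphism classes.} Finally I would use \cref{thm:thmonetoonecorrsmallext} together with \cref{thm:equivalence_of_cat}. The assignments $\sh{E}\mapsto(\Plsh\sh{E},\Pls\sh{E})$ and $(\sh{A},\sh{B})\mapsto\sh{E}(\sh{A},\sh{B})$ are mutually inverse on isomorphism classes and preserve indecomposability. Hence isomorphism classes of indecomposable $S$-small perverse sheaves correspond exactly to extension pairs, relative to their restriction $\sh{F}$, whose canonical morphism does not split as a direct sum. There is one remaining subtlety: an isomorphism in $\perv{p}{X}$ between two $S$-small objects need not respect the identifications $\phi$. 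I would treat it by transporting $\phi$ along the isomorphism, so that isomorphism classes in $\perv{p}{X}$ match isomorphism classes of pairs up to automorphisms of $\sh{F}$. Combining this with the non-small case yields the stated classification.
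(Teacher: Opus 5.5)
Your proposal is correct and takes the same route as the paper. The paper gives no separate proof: it treats the theorem as a direct consequence of \cref{small_no_summ_S} (an indecomposable that is not $S$-small must be $\imath_*\sh{L}[-p(S)]$ with $\sh{L}$ indecomposable), together with \cref{thm:equivalence_of_cat} and \cref{cor_on_indecomp} for the $S$-small case, which is exactly your dichotomy. Your extra care makes explicit several points the paper leaves implicit: you attach to each $\sh{E}$ its own restriction $\jmath^*\sh{E}$, you check that summands of $S$-small objects are again $S$-small, and you transport $\phi$ along isomorphisms. Letting $\sh{F}$ vary is genuinely needed, because a summand $\sh{E}_k$ lies in $\sextf{S}{\jmath^*\sh{E}_k}$ rather than in $\sextf{S}{\sh{F}}$.
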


\begin{remark}\label{rmk_non_splitting} Note that in general we do not require an extension pair $(\sh{A},\sh{B})\in\ep{X\smallsetminus S}$ to be relative to an indecomposable $\sh{F}\in\perv{p}{U}$. That is if $\sh{F}$ is indecomposable, then the corresponding $S$-small extension is indecomposable as the canonical morphism $\beta\alpha\colon\sh{A}\to \sh{B}$ cannot be written as a direct sum of morphisms. The converse, however, does not hold.
\end{remark}

\section{Examples}\label{sec_on_examples} In this final section we present illustrative examples of some two strata spaces. In the first three examples the category of perverse sheaves is equivalent to a module category over a finite-dimensional algebra while in the last case is not, see \cite[Corollary 5.2]{CIPRIANI_2021}. 

We will denote by $\kk_Y$ the constant sheaf on $Y$.

\begin{example}\label{ex:P1} Let us consider the complex projective line stratified by a point and its complement, that is $X=\C\PP^1 \supset \pt$. We have an open stratum $U\cong\C$ and a closed stratum $S\cong\pt$. Let $m$ be the middle perversity on $X$, that is $(m(U),m(S))=(-1,0)$. We have $\perv{m}{U}\cong\kvect[1]$ and one can check that, up to isomorphism, all the extension pairs relative to the only indecomposable perverse sheaf $\sh{F}\cong \kk_U[1]$ are the following four.
\medskip
\begin{center}  
  \begin{tabular}{|c|c|}
    \hline
      Extension pair & Small extension \\
    \hline
       $(\pfun{m}{\jmath_!}\sh{F},\pfun{m}{\jmath_{!*}}\sh{F}) \cong (\jmath_!\kk_U[1],\kk_X[1])$ 
       &  $\pfun{m}{\jmath_!}\sh{F} \cong \jmath_!\kk_U[1]$     \\ \hline
       $(\pfun{m}{\jmath_{!*}}\sh{F},\pfun{m}{\jmath_{!*}}\sh{F}) \cong (\kk_X[1],\kk_X[1])$ 
       &  $\pfun{m}{\jmath_{!*}}\sh{F} \cong \kk_X[1]$     \\ \hline
       $(\pfun{m}{\jmath_{!*}}\sh{F},\pfun{m}{\jmath_*}\sh{F}) \cong (\kk_X[1],\jmath_*\kk_U[1])$ 
       &  $\pfun{m}{\jmath_*}\sh{F} \cong \jmath_*\kk_U[1]$     \\ \hline
       $(\pfun{m}{\jmath_!}\sh{F},\pfun{m}{\jmath_*}\sh{F}) \cong (\kk_X[1],\jmath_*\kk_U[1])$ 
       &  $\pfun{m}{\Xi}\sh{F}$     \\ \hline
    \end{tabular}
\end{center}
The above four extensions and $\imath_*\kk_{\pt}$ are all the indecomposable objects in $\perv{m}{X}$.
\end{example}

The following example is due to Jon Woolf.

\begin{example}\label{example:kronecker} Let us consider the circle stratified by a point and its complement, that is $X=S^1\supset\pt$. We have two contractible strata $U\cong (0,1)$ and $S=\pt$. Let $o$ be the zero perversity on $X$, that is $(o(U),o(S))=(0,0)$.  Let $\sh{L}\in\loc{U}$ a local system with stalk $V$, then $\jmath_!\sh{L}\in\perv{o}{X}$ is simple. The extension pairs relative to $\sh{L}$ are of the form $(\jmath_!V,\sh{B})$, where $\jmath_!V\hookrightarrow\sh{B}\hookrightarrow \jmath_*V$. The sub-objects $\sh{B}$ of $\jmath_*V$ are in bijection with the sub-objects $\sh{B}'\hookrightarrow \jmath_*V/\jmath_!V\cong V^2$ and hence also with subspaces $i \colon W\to V^2$. The indecomposable representations of the Kronecker quiver $W\rightrightarrows V$ are then in bijection with the vector spaces $W$ and $V$ and maps $a,b\colon W\to V$ given by composing $i$ with the two projections such that $\ker a\cap \ker b=0$. The latter requirement corresponds to the $S$-smallness condition. 
\end{example}

\begin{example}\label{p2_strat_p1} Let us consider the complex projective sphere $X=\C\PP^2$ stratified by a complex projective line and its complement, that is there are two strata $U\cong\C^2$ and $S=\C\PP^1$. Let $m$ be the middle perversity on $X$, that is $(m(U),m(S))=(-2,-1)$. We have $\perv{m}{S}\cong\loc{\C\PP^1}[1]$ and one can check that, up to isomorphism, all the extension pairs relative to $\sh{F}\cong\kk_U[2]$ are 
\medskip
\begin{center}  
  \begin{tabular}{|c|c|}
    \hline
      Extension pair & Small extension \\
    \hline
       $(\pfun{m}{\jmath_!}\sh{F},\pfun{m}{\jmath_{!*}}\sh{F}) \cong (\jmath_!\kk_U[2],\kk_X[2])$ 
       &  $\pfun{m}{\jmath_!}\sh{F} \cong \jmath_!\kk_U[2]$     \\ \hline
       $(\pfun{m}{\jmath_{!*}}\sh{F},\pfun{m}{\jmath_{!*}}\sh{F}) \cong (\kk_X[2],\kk_X[2])$ 
       &  $\pfun{m}{\jmath_{!*}}\sh{F} \cong \kk_X[2]$     \\ \hline
       $(\pfun{m}{\jmath_{!*}}\sh{F},\pfun{m}{\jmath_*}\sh{F}) \cong (\kk_X[2],\jmath_*\kk_U[2])$ 
       &  $\pfun{m}{\jmath_*}\sh{F} \cong \jmath_*\kk_U[2]$     \\ \hline
    \end{tabular}
\end{center}
\medskip
Note that $(\pfun{m}{\jmath_{!}}\sh{F},\pfun{m}{\jmath_{*}}\sh{F})$ is not an extension pair relative to $\sh{F}$ since the third condition in \cref{ext_pair_relative} is not satisfied. This reflects the impossibility noted in \cite[Example 6.3]{MR833195} of defining a unique maximal extension of $\sh{F}\in\perv{p}{U}$. The above three extensions and $\imath_*\kk_{\C\PP^1}[1]$ are all the indecomposable extensions of $\perv{m}{\C\PP^2}$.
\end{example} 

\begin{example}
 Let us consider $X=\C$ stratified by the origin $S=(0,0)$ and its complement $U=\C^*$. Let $m$ be the middle perversity on $X$, that is $(m(U), m(S))=(-1,0)$, and let $\sh{L}[1]\in\perv{m}{U}=\loc{U}[1]$ a (shifted) local system on $U$. We have two distinct cases:
 \begin{enumerate}
 \item If $\sh{L}\not\cong\kk_U$, then $\sh{E}\cong\pfun{m}{\jmath_!}\sh{L}[1]=\pfun{m}{\jmath_*}\sh{L}[1]$ and hence $\sh{E}\in\perv{m}{X}$ is the unique $S$-small extension of $\sh{L}[1]$.
 \item If $\sh{L}\cong	\kk_{U}$, then there are four $S$-small extensions, namely $\kk_X[1],\pfun{m}{\jmath_!}\kk_U[1], \pfun{m}{\jmath_*}\kk_U[1]$ and the maximal extension $\pfun{m}{\Xi}\kk_U[1]$ relative to the pair $(\pfun{m}{\jmath_!}\sh{L}[1], \pfun{m}{\jmath_*}\sh{L}[1])$.
  \end{enumerate} 
  \end{example}

\bibliographystyle{myalpha}
\bibliography{References-2.bib}
\end{document}